\numberwithin{equation}{section}
\newtheorem{theorem}{Theorem}[section]        
\newtheorem{lemma}[theorem]{Lemma}       
\newtheorem{problem}[theorem]{Problem}
\newtheorem{corollary}[theorem]{Corollary}       
\newtheorem{observation}[theorem]{Observation}       
\newtheorem{claim}{Claim}[theorem]       
\newtheorem{proposition}[theorem]{Proposition}
\theoremstyle{definition}       
\newtheorem{definition}[theorem]{Definition}       
\theoremstyle{remark}       
\newtheorem{remark}{Remark}         
\newcommand{\mc}[1]{\mathcal{#1}}       
\newcommand{\mbb}[1]{\mathbb{#1}}
\newcommand{\setm}{\setminus}       
\newcommand{\empt}{\emptyset}       
\newcommand{\subs}{\subset}       
\newcommand{\dom}{\operatorname{dom}}       
\newcommand{\ran}{\operatorname{ran}}       
\def\<{\left\langle}       
\def\>{\right\rangle}       
\newcommand{\concat}{\mathop{{}^{\frown}\makebox[-3pt]{}}}
\newcommand{\paseq}[1]{\mc C^P(#1)}
\newcommand{\plaseq}[2]{\mc C^P_{#1}(#2)}
\newcommand{\lev}[2]{\operatorname{I}_{#1}(#2)}
\newcommand{\aseq}[1]{\mc C(#1)}
\newcommand{\laseq}[2]{\mc C_{#1}(#2)}
\newcommand{\SEQ}{\operatorname{SEQ}}
\newcommand{\almd}[2]{\hat {\mbb A}(#1, #2)}
\newcommand{\indn}[2]{\hat{\mbb I}(#1, #2)}
\newcommand{\om}{\omega}
\newcommand{\ka}{\kappa}
\newcommand{\un}{\bigcup}
\newcommand{\al}{\alpha}
\newcommand{\be}{\beta}
\newcommand{\ga}{\gamma}
\newcommand{\conseq}[2]{\<#1\>_{#2}}
\title{Cardinal sequences of Lindel\"{o}f scattered P-spaces}
\author[J. C. Martinez]{Juan Carlos Mart\'{\i}nez}
\address{Facultat de Matem\`atiques  i Inform\`atica\\ Universitat de Barcelona \\ Gran
 Via 585 \\ 08007 Barcelona, Spain}
\email{jcmartinez@ub.edu}
\author[L. Soukup]{
Lajos Soukup }
\address{HUN-REN Alfr{\'e}d R{\'e}nyi Institute of Mathematics }
\email{soukup@renyi.hu}
\thanks{The research on and preparation of this paper was
supported by  OTKA grant   K129211}       
\date{\today}       
\begin{document}       
       
\begin{abstract}       
  We continue our investigation of cardinal sequences associated with locally Lindelöf, 
  scattered, Hausdorff P-spaces (abbreviated as LLSP spaces). We outline a method for 
  constructing LLSP spaces from cone systems and partial orders with specific properties. 
  Additionally, we establish limitations on the cardinal sequences of LLSP spaces. 
  Finally, we present both a necessary condition and a distinct sufficient condition 
  for a sequence $\<{\kappa}_{\alpha}:{\alpha}<{\omega}_2\>$ 
  to be the cardinal sequence of an LLSP space.
\end{abstract}       
       
\maketitle

\section{Introduction}

Recall that if  $X$ is a topological space and $\al$ is an ordinal, the $\al\mbox{th}$-{\em Cantor-Bendixson level} of $X$ is defined  as $I_{\al}(X) =$ the set of isolated points of $X\setminus \bigcup \{I_{\be}(X) : \be < \al\} $.  Then, $X$ is {\em scattered} if there is an ordinal $\al$ such that $X = \bigcup \{I_{\be}(X) : \be < \al\} $. If $X$ is a scattered space, the {\em cardinal sequence} of $X$ is defined as $\mbox{CS}(X) = \langle |I_{\al}(X)| : \al < \delta \rangle$, where $\delta$ is the least ordinal $\ga$ such that $I_{\ga}(X)$ is a finite set. Recall that a topological space $X$ is a {\em P-space}, if the intersection of every countable family of open sets in $X$ is open in $X$.''

In this paper, we continue our investigation of cardinal sequences of LLSP spaces, building upon our work initiated in \cite{MaSoLinSP22}. Let us recall that, 
in the context of this study, an LLSP space refers to a locally Lindelőf, scattered, Hausdorff P-space.

Cone systems and partial orders with special properties have played a fundamental role in the construction of locally compact scattered Hausdorff (LCS) spaces. 
These concepts have been explored in works such as \cite{JuSo-countably-tight-96} and \cite{Ba-locally-generic-02}. 
In Section \ref{sc:cone}, we extend a similar analysis to LLSP spaces instead of LCS spaces.


There are numerous well-established and easily demonstrable 
constraints regarding the cardinal sequences of LCS spaces. In Section \ref{sc:basic}, 
we endeavor to establish analogous theorems for LLSP spaces.

To simplify the presentation of our results, we use $\paseq {\alpha}$ and $\aseq{{\alpha}}$ to represent, respectively, 
the classes of all cardinal sequences of length $\alpha$ 
associated with LLSP and LCS spaces.

In their work \cite{JW}, Juhász and Weiss 
gave a full description of the class $\aseq{{\omega}_1}$ by 
establishing  
that a  sequence 
$\<{\kappa}_{\alpha}:{\alpha}<{\omega}_1\>$ of infinite cardinals
is the cardinal sequence of an LCS space iff 
${\kappa}_{\beta}\le ({\kappa}_{\alpha})^{\omega}$ holds for each 
${\alpha}<{\beta}<{\omega}_1$.

In Section \ref{sc:short}, we will encounter the challenge of 
characterizing short sequences for LLSP spaces, 
which seems to be a much harder task. 
Initially,  we demonstrate  that the natural  transfer of the previously 
mentioned result of Juhász and Weiss   
for LLSP spaces is unsuccessful. 
In Theorem \ref{tm:positive}, we provide both a necessary condition and a 
distinct sufficient condition for a sequence $\<{\kappa}_{\alpha}:{\alpha}<{\omega}_2\>$ 
to be the cardinal sequence of an LLSP space.

The necessary and sufficient conditions mentioned earlier 
coincide in the Cohen model (see Theorem \ref{tm:Cohen}). However, in Theorem \ref{tm:con}, we illustrate 
that these properties  consistently differ from each other.
As it turns out, we lack a complete ZFC characterization of $\paseq{3}$, 
the class of cardinal sequences associated with LLSP spaces of height  $3$.

Let $\conseq \kappa\alpha $ denote the constant $\kappa$-valued sequence of length $\alpha$. 
It is almost trivial that if
$\conseq \kappa\alpha{}^\frown \<{\lambda}\>\in \aseq{{\alpha}+1}$ with $cf({\alpha})\le {\omega}$,
then ${\lambda}\le {\kappa}^{\omega}$. Analogously, one can easily prove that if 
$\conseq \kappa\alpha{}^\frown \<{\lambda}\>\in \paseq{{\alpha}+1}$ then 
${\lambda}\le {\kappa}^{\omega_1}$ 
provided that  either ${\alpha}$ is a successor ordinal or  
$cf({\alpha})={\omega}_1$. However, the inequality ${\lambda}\le {\kappa}^{\omega_1}$  does not necessary hold when $cf({\alpha})={\omega}$:
in Theorem \ref{Theorem 1} we prove that 
it is consistent that $2^{\omega_1} = \omega_2$ and $\langle \omega_2\rangle_{\om} \concat \langle \omega_3\rangle \in \paseq {\omega + 1}$.

\section{LLSP spaces from cone systems}\label{sc:cone}

\begin{definition}
Given a scattered space $X$ we say that a 
 neighborhood assignment $U:X\to {\tau}_X$ is an {\em LLSP neighborhood assignment}
iff 
\begin{enumerate}[(1)]
\item $U(y)$ is a clopen Lindelöf neighborhood of $y$ for each $y\in X$,
\item if $y\in I_{\alpha}(X)$, then 
\begin{displaymath}
U(y)\setm I_{<\alpha}(X)=\{y\}.
\end{displaymath} 
\end{enumerate}
\end{definition}
As we observed in \cite[Proposition 1.1]{MaSoLinSP22},  LLSP spaces are 0-dimensional, and so 
the following holds: 
\begin{proposition}\label{pr:llspassignment}
Every LLSP space $X$ has an LLSP neighborhood assignment
$U_X$.
\end{proposition}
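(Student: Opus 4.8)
The plan is to construct $U_X$ one point at a time, relying only on two consequences of $X$ being an LLSP space: $X$ is $0$-dimensional (by \cite[Proposition 1.1]{MaSoLinSP22}, as recalled above) and $X$ is locally Lindelöf. Fix $y\in X$ and let ${\alpha}$ be the unique ordinal with $y\in I_{\alpha}(X)$.

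The first step is the standard observation that $I_{<\alpha}(X)=\bigcup\{I_{\beta}(X):{\beta}<{\alpha}\}$ is open in $X$; equivalently, its complement — the ${\alpha}$th Cantor–Bendixson derivative $X\setm I_{<\alpha}(X)$ — is closed in $X$, which I would verify by a routine transfinite induction on ${\alpha}$ (the successor and limit cases follow immediately from the definition of the Cantor–Bendixson levels). Then $X\setm I_{<\alpha}(X)$ is a closed subspace in which $y$ is an isolated point, by the very definition of $I_{\alpha}(X)$; hence there is an open $V\subseteq X$ with $V\cap\bigl(X\setm I_{<\alpha}(X)\bigr)=\{y\}$, that is, $V\setm I_{<\alpha}(X)=\{y\}$.

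Next, using local Lindelöfness, pick a Lindelöf neighborhood $L$ of $y$, and then, using $0$-dimensionality, pick a clopen set $W$ with $y\in W\subseteq \operatorname{int}(L)\cap V$. Such a $W$ is a clopen neighborhood of $y$; being closed in the Lindelöf space $L$, it is Lindelöf; and $W\setm I_{<\alpha}(X)\subseteq V\setm I_{<\alpha}(X)=\{y\}$ together with $y\in W$ gives $W\setm I_{<\alpha}(X)=\{y\}$. So $W$ satisfies clauses (1) and (2) of the definition, and putting $U_X(y):=W$ for each $y\in X$ yields the required LLSP neighborhood assignment.

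I do not expect a genuine obstacle here. The two points that merit a little care are the openness of $I_{<\alpha}(X)$ (the transfinite induction above) and the fact that one can arrange $U_X(y)$ to be clopen \emph{and} Lindelöf simultaneously — which is possible precisely because a clopen, hence closed, subset of a Lindelöf space is Lindelöf, so shrinking a clopen neighborhood of $y$ into (the interior of) a Lindelöf one costs nothing. Note that the $P$-space hypothesis is not invoked directly; it enters only through the cited $0$-dimensionality of LLSP spaces.
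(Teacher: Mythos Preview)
Your proof is correct and fills in exactly the details the paper leaves implicit: the paper merely records that LLSP spaces are $0$-dimensional (citing \cite[Proposition 1.1]{MaSoLinSP22}) and asserts that the proposition follows, without writing out the argument. Your construction---shrinking a clopen neighborhood of $y$ into the interior of a Lindel\"of neighborhood and into an open set isolating $y$ above level $\alpha$---is precisely the intended route.
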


If $U:X\to \mathcal P(X)$ is a function and $I\subs X$, write 
\begin{displaymath}
U[I]=\bigcup\{U(x):x\in I\}.
\end{displaymath}

\begin{definition}\label{df:llsp-cone}
Assume that $X$ is a set and $U:X\to \mathcal P(X)$ is a function.
We say that $U$ is a {\em weak LLSP-cone assignment (for $X$)}    iff for some ordinal 
${\delta}$ the set $X$ has  a partition $\<X_{\alpha}:{\alpha}<{\delta}\>$ and 
there is a function $D:{[X]}^{2}\to {[X]}^{{\omega}}$
such that: 
\begin{enumerate}[(a)]
\item \label{cone:a} $\forall {\beta}<{\delta}$ $\forall p\in X_{\beta}$\ 
($p\in U(p)$ and $U(p)\setm \{p\}\subs X_{<{\beta}}$),\smallskip
\item \label{cone:c} for each $\{x,y\}\in {[X]}^{2}$,
if $x\notin U(y)$ and $y\notin U(x)$, then  
$D(x,y)\subs  {U(x)\cap U(y)}$ and 
\begin{displaymath}
U(x)\cap U(y)\subs U[D(x,y)],
\end{displaymath}
\item \label{cone:d}  for each $\{x,y\}\in {[X]}^{2}$
if $x\in U(y)$, then  
$D(x,y)\subs {U(x)\setm  U(y)}$  and  
\begin{displaymath}
U(x)\setm U(y)\subs U[D(x,y)].
\end{displaymath}
\end{enumerate}
We say that  $U$ is an {\em LLSP-cone assignment (for $X$)} if 
\eqref{cone:b} below also holds: 
\begin{enumerate}[(a)]\addtocounter{enumi}{3}
  \item \label{cone:b} $\forall {\alpha}<{\beta}<{\delta}$ $\forall p\in X_{\beta}$ $\forall I\in {[U(p)\setm\{p\}]}^{{\omega}}$
  \begin{displaymath}
  (U(p)\setm U[I])\cap X_{\alpha}\ne \empt,
  \end{displaymath}
\end{enumerate}
We will say that the partition $\<X_{\alpha}:{\alpha}<{\delta}\>$ is 
a {\em witnessing partition} for $U$,
and the function $D$ is the {\em LLSP-function} associated with $U$.

If $\al < \delta$, we write $X_{\leq \al} = \bigcup \{X_{\be} : \be \leq \al \}$.
\end{definition}

The following statement is straightforward:
\begin{proposition}
  If  $U$ is a weak  LLSP-cone assignment, then a witnessing  partition $\<X_{\alpha}:{\alpha}<{\delta}\>$
is  determined by the following recursive formula:
\begin{displaymath}
X_{\beta}=\{p\in X\setm X_{<{\beta}}: U(p)\setm X_{<{\beta}}=\{p\} \},
\end{displaymath}
and ${\delta}=\min\{{\beta}:X_{\beta}=\empt\}$.
 
\end{proposition}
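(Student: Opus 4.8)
The plan is to verify that the recursive formula produces exactly the partition whose existence is guaranteed by the definition of a weak LLSP-cone assignment, and that this partition is uniquely determined by $U$. The key observation is that condition \eqref{cone:a} already encodes how the witnessing partition must look: for $p \in X_{\beta}$, we have $U(p) \setm \{p\} \subs X_{<\beta}$, which forces $U(p) \setm X_{<\beta} = \{p\}$, and conversely $p$ cannot lie in any earlier class $X_{\gamma}$ with $\gamma < \beta$ since then $U(p) \setm X_{<\gamma} = \{p\}$ would have to hold but $X_{<\gamma} \subsetneq X_{<\beta}$ need not absorb $U(p) \setm \{p\}$. Thus the first step is to fix any witnessing partition $\<X_{\alpha}:{\alpha}<{\delta}\>$ for $U$ (which exists by hypothesis) and prove by induction on $\beta$ that it agrees with the sets $X'_{\beta}$ defined by the recursive formula $X'_{\beta}=\{p\in X\setm X'_{<{\beta}}: U(p)\setm X'_{<{\beta}}=\{p\}\}$.

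For the inductive step, assume $X_{\gamma} = X'_{\gamma}$ for all $\gamma < \beta$, so that $X_{<\beta} = X'_{<\beta}$. First I would show $X_{\beta} \subs X'_{\beta}$: if $p \in X_{\beta}$ then by \eqref{cone:a} we have $p \in U(p)$ and $U(p) \setm \{p\} \subs X_{<\beta} = X'_{<\beta}$, hence $p \notin X'_{<\beta}$ (since $p \notin X_{<\beta}$ as the $X_{\alpha}$ partition $X$) and $U(p) \setm X'_{<\beta} = \{p\}$, so $p \in X'_{\beta}$. For the reverse inclusion $X'_{\beta} \subs X_{\beta}$: take $p \in X'_{\beta}$, so $p \notin X'_{<\beta} = X_{<\beta}$; since $\<X_{\alpha}:{\alpha}<{\delta}\>$ partitions $X$, there is a unique ${\alpha} \geq {\beta}$ with $p \in X_{\alpha}$, and by \eqref{cone:a} applied at ${\alpha}$ we get $U(p) \setm \{p\} \subs X_{<{\alpha}}$. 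But $U(p) \setm X'_{<\beta} = U(p) \setm X_{<\beta} = \{p\}$ means $U(p) \setm \{p\} \subs X_{<\beta}$, and since $p \in X_{\alpha}$ is isolated in $X_{\geq \beta}$ in the relevant sense, any element of $X_{\gamma}$ with $\beta \leq \gamma < \alpha$ would belong to $U(p)$ only if... — more carefully, since $U(p) \setm \{p\} \subs X_{<\beta}$, the point $p$ satisfies the defining property of $X_{\beta}$ relative to the original partition, and by the "$\subs$" direction already proved together with the fact that the original classes are pairwise disjoint and $p \in X_{\alpha}$, minimality forces ${\alpha} = {\beta}$. Finally, the claim about ${\delta}$ follows because once $X'_{\beta} = \empt$ for some $\beta$, the construction halts, and $X = \bigcup\{X_{\alpha}:{\alpha}<{\delta}\} = \bigcup\{X'_{\alpha}:{\alpha}<{\beta}\}$ forces ${\beta} = {\delta}$; conversely no $X'_{\alpha}$ with ${\alpha}<{\delta}$ is empty since each equals the nonempty $X_{\alpha}$.

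The only mild subtlety — and the step I expect to require the most care — is ruling out that a point $p \in X'_{\beta}$ could actually have come from some $X_{\alpha}$ with ${\alpha} > {\beta}$ in the original partition. The resolution is that $U(p) \setm \{p\} \subs X_{<\beta}$ together with disjointness of the $X_{\gamma}$'s means $p$ is "isolated from above" at level $\beta$ already, so it cannot have been postponed to a strictly later level in a genuine witnessing partition; formally one invokes that $X_{\beta} \subs X'_{\beta}$ (just proved) exhausts all points whose $U$-neighborhood is contained in $X_{<\beta} \cup \{p\}$, and since $p$ is such a point and the original classes are disjoint, $p \in X_{\beta}$. Everything else is a routine unwinding of \eqref{cone:a} and the partition property, with no use of conditions \eqref{cone:c}, \eqref{cone:d}, or \eqref{cone:b}.
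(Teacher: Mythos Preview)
Your inclusion $X_\beta \subseteq X'_\beta$ is fine, but the reverse inclusion $X'_\beta \subseteq X_\beta$ cannot be established from the axioms of a \emph{weak} LLSP-cone assignment, and your attempted resolution is circular. You assert that ``minimality forces $\alpha = \beta$'', but condition \eqref{cone:a} imposes no minimality whatsoever: it only says that if $p \in X_\alpha$ then $U(p)\setminus\{p\}\subseteq X_{<\alpha}$, which is perfectly compatible with the stronger inclusion $U(p)\setminus\{p\}\subseteq X_{<\beta}$ for some $\beta<\alpha$. Concretely, take $X=\{a,b\}$ with $U(a)=\{a\}$, $U(b)=\{b\}$, $D(a,b)=\emptyset$. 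Then both the one-block partition $X_0=\{a,b\}$ and the two-block partition $X_0=\{a\}$, $X_1=\{b\}$ satisfy \eqref{cone:a}, \eqref{cone:c}, \eqref{cone:d}. The recursive formula produces the first; the second is an equally valid witnessing partition that does not match it. So the uniqueness statement you are trying to prove is simply false for weak cone assignments.

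The paper gives no proof (it calls the proposition ``straightforward''), and the intended content is that the recursive formula yields \emph{a} canonical witnessing partition, not that every witnessing partition agrees with it. For that, one needs only: (i) the $X'_\beta$ are pairwise disjoint (clear from the recursion); (ii) they cover $X$ --- this follows from the inclusion $X_{\leq\alpha}\subseteq X'_{\leq\alpha}$, which is what your induction actually proves once you weaken the inductive hypothesis to $X_{<\alpha}\subseteq X'_{<\alpha}$; and (iii) condition \eqref{cone:a} holds for $\langle X'_\beta\rangle$, which is immediate from the defining formula. It is worth noting that your uniqueness argument \emph{would} succeed under the extra hypothesis \eqref{cone:b}: if $p\in X_\alpha$ with $\alpha>\beta$ but $U(p)\setminus\{p\}\subseteq X_{<\beta}$, then \eqref{cone:b} with $I=\emptyset$ forces $U(p)\cap X_\beta\neq\emptyset$, a contradiction. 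So the partition is unique for full LLSP-cone assignments, just not for weak ones.
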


\begin{definition}
  For $U:X\to \mathcal P(X)$ and $p\in X$ let
  \begin{displaymath}
  \mathcal B_{U}(p)=\{U(p)\setm U[I]:I\in {[U(p)\setm \{p\}]}^{{\omega}}\},
  \end{displaymath}
  and 
  \begin{displaymath}
  \mathbb B_U=\{\mathcal B_U(p):p\in X\}.
  \end{displaymath}
  \end{definition}

  \begin{theorem}\label{tm:llspgen}\par (1) 
   If $U:X\to \mathcal P(X)$ is a weak  LLSP-cone assignment with 
    a witnessing partition
    $\<X_{\alpha}:{\alpha}<{\delta}\>$, then 
    \begin{enumerate}[$(i)$]
    \item $\mathbb B_U$ is a 
    neighborhood system of an LLSP space $X_U=\<X,{\tau}_U\>$, 
    \item 
    $U(p)$ is a clopen Lindelőf subspace of $X_U$ for each $p\in X$,
    \item 
    $I_{\le \alpha}(X_U)\supset X_{\le\alpha}$ for each ${\alpha}<{\delta}$.
    \end{enumerate} 
    Moreover, if $U$ is a LLSP-cone assignment, then 
\begin{enumerate}[(i)]
\item[(iv)] $I_{\alpha}(X_U)= X_{\alpha}$ for each ${\alpha}<{\delta}$.
\end{enumerate}

    \par (2) If $X=\<X,{\tau}\>$ is an LLSP-space,  
 and   
$U:X\to \tau$ is a neighborhood assignment such that
for each  ${\alpha}<{\delta}$ and for each   $p\in I_{\alpha}(X)$ 
\begin{displaymath}
\tag{$\dag$}
\text{$U(p)$ is a clopen Lindelöf subset of $X$
with $U(p)\setm I_{<{\alpha}}(X)= \{p\}$,}
\end{displaymath}
then $U$ is an LLSP-cone assignment  with the witnessing partition
$\<I_{\alpha}(X):{\alpha}<{\delta}\>$,  and ${\tau}_U={\tau}$. 
    \end{theorem}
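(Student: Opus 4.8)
The plan is to prove the two halves of Theorem~\ref{tm:llspgen} in turn, treating (1) first and then deriving (2) from it together with Definition~\ref{df:llsp-cone}. For part~(1), I would begin by checking that $\mathbb B_U$ is a genuine neighborhood system, i.e.\ that for each $p$ the family $\mathcal B_U(p)$ is a filter base of sets containing $p$, and that the standard compatibility axiom holds: if $q\in V$ for some $V\in\mathcal B_U(p)$, then there is $W\in\mathcal B_U(q)$ with $W\subs V$. The point $p$ lies in every member of $\mathcal B_U(p)$ because $I\subs U(p)\setm\{p\}$ in the definition of $\mathcal B_U(p)$. Closure under finite intersections follows since $(U(p)\setm U[I_0])\cap(U(p)\setm U[I_1]) = U(p)\setm U[I_0\cup I_1]$ and ${[U(p)\setm\{p\}]}^{\om}$ is closed under finite unions. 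The compatibility axiom is exactly where conditions \eqref{cone:c} and \eqref{cone:d} of Definition~\ref{df:llsp-cone} are used: given $q\in U(p)\setm U[I]$, I would split into the cases $p\in U(q)$, $q\in U(p)$ with $p\notin U(q)$ and $U(p)\subs U(q)$, and $q\in U(p)$, $p\notin U(q)$ with $U(p)\not\subs U(q)$ — in each case the sets $D(p,q)$ or $D(q,p)$ are countable subsets of $U(q)$ (or $U(p)$) and the containment clauses let me absorb $U[I]$ together with a suitable countable correction set into a basic neighborhood of $q$ inside $U(p)\setm U[I]$. This bookkeeping — choosing the right finite combination of $D$-values and the index set $I$ to subtract from $U(q)$ — is the main technical obstacle, though it is routine once the cases are laid out.

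Next I would verify the three listed properties of $X_U$. For~(ii), that $U(p)$ is clopen: it is open since for any $q\in U(p)$ one finds a basic neighborhood of $q$ inside $U(p)$ by the argument just sketched (with $I=\empt$), and it is closed because its complement is open — if $q\notin U(p)$, then using \eqref{cone:c} (when $p\notin U(q)$) or \eqref{cone:d} (when $p\in U(q)$, impossible here by symmetry of the roles) one produces a basic neighborhood of $q$ disjoint from $U(p)$. That $U(p)$ is Lindelöf (indeed: every open cover has a countable subcover) follows from condition~\eqref{cone:a}, $U(p)\setm\{p\}\subs X_{<\be}$, by induction on $\be$: an open cover of $U(p)$ picks out one set covering $p$, which is of the form $U(p)\setm U[I]$ for a countable $I$, and then $U[I]$ is a countable union of sets $U(q)$ with $q$ of strictly smaller rank, each Lindelöf by the inductive hypothesis; so one assembles a countable subcover. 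This is also what makes $X_U$ a P-space: an intersection of countably many basic neighborhoods $U(p)\setm U[I_n]$ equals $U(p)\setm U[\bigcup_n I_n]$, which is again basic since $\bigcup_n I_n$ is countable — so countable intersections of opens are open. The space is Hausdorff and scattered by \eqref{cone:a} via an easy rank argument; and for~(iii) one shows $X_{\le\al}\subs I_{\le\al}(X_U)$ by induction on $\al$, noting that every $p\in X_\al$ has the basic neighborhood $U(p)\setm U[\,\cdot\,]$ which, modulo $X_{<\al}$, can be shrunk to $\{p\}$, so $p$ is isolated in $X_U\setm X_{<\al}\supseteq X_U\setm I_{<\al}(X_U)$.

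For the ``Moreover'' clause and part (iv), the reverse inclusion $I_\al(X_U)\subs X_\al$ is precisely what condition~\eqref{cone:b} delivers: if $p\in X_\be$ with $\be>\al$, then \eqref{cone:b} says every basic neighborhood $U(p)\setm U[I]$ of $p$ still meets $X_\al\subs I_\al(X_U)$ (using (iii)), so $p$ is not isolated in $X_U\setm I_{<\al}(X_U)$, hence $p\notin I_{\le\al}(X_U)$; combined with (iii) this forces $I_\al(X_U)=X_\al$ for all $\al<\de$. Finally, for part~(2), I would show that a neighborhood assignment $U$ on an LLSP space $X$ satisfying $(\dag)$ meets all of \eqref{cone:a}--\eqref{cone:b} with witnessing partition $\<I_\al(X):\al<\de\>$: clause~\eqref{cone:a} is immediate from $U(p)\setm I_{<\al}(X)=\{p\}$; to get the $D$-function for \eqref{cone:c} and \eqref{cone:d}, note that in an LLSP space $U(x)\cap U(y)$ (resp.\ $U(x)\setm U(y)$) is an open Lindelöf subspace, hence covered by countably many sets from $\{U(z):z\in X\}$, and I let $D(x,y)$ be the countable set of centers $z$ of such a cover chosen inside the relevant set — this is exactly \eqref{cone:c}/\eqref{cone:d}; clause~\eqref{cone:b} holds because if it failed for some $p\in I_\be(X)$, $\al<\be$, and countable $I$, then $U(p)\setm U[I]$ would be a clopen Lindelöf P-space of scattered height at most (essentially) $\be$ whose $\al$-th level is empty while higher levels are nonempty, contradicting the fact that in a nonempty scattered space the levels below the top are nonempty down to level $0$ — more precisely, $p\in I_\be$ would be isolated after removing $I_{<\al}$ alone, contradicting $U(p)\setm I_{<\al}(X)=\{p\}$ only if $\be=\al$; spelling this out carefully via the Cantor--Bendixson levels of the subspace is the delicate point here. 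Once \eqref{cone:a}--\eqref{cone:b} are in place, $U$ is an LLSP-cone assignment, and ${\tau}_U={\tau}$ follows because the basic $U$-neighborhoods $U(p)\setm U[I]$ are open in ${\tau}$ (countable intersections of opens, using the P-space property) and conversely every ${\tau}$-open set is a union of such, since the $U(p)$ form a clopen base refined by the $\mathcal B_U(p)$.
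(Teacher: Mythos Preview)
Your approach matches the paper's essentially step for step: verify the neighborhood-base axioms, then Hausdorff, P-space, Lindel\"of (by induction on rank), scatteredness and the level inclusions, and for part~(2) check \eqref{cone:a}--\eqref{cone:d} and the equality of topologies. Three places deserve tightening. First, Hausdorffness does not follow from \eqref{cone:a} alone: to separate $x,y$ with $x\notin U(y)$ and $y\notin U(x)$ you must invoke \eqref{cone:c} to get $U(x)\cap U(y)\subs U[D(x,y)]$, and then $U(x)\setm U[D(x,y)]$ and $U(y)\setm U[D(x,y)]$ are the disjoint basic neighborhoods (this is exactly how the paper argues). Second, your verification of \eqref{cone:b} in part~(2) via Cantor--Bendixson levels of the subspace can be made to work (open subspaces inherit CB-levels), but the paper's argument is one line: $G=U(p)\setm U[I]$ is $\tau$-open (P-space), $p\in G$, and $I_\alpha(X)$ is dense in $X\setm I_{<\alpha}(X)$, so $G\cap I_\alpha(X)\ne\empt$. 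Third, for $\tau\subs\tau_U$ it is not enough to say ``the $U(p)$ form a clopen base''---they need not; the paper instead uses $0$-dimensionality to reduce to a clopen Lindel\"of $G$, observes that $U(p)\setm G$ is closed in the Lindel\"of set $U(p)$ hence Lindel\"of, covers it by countably many $U(i)$ with $i\in U(p)\setm G$, and concludes $U(p)\setm U[\{\,i\,\}]\subs G$. With these three fixes your outline is correct and coincides with the paper's proof.
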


    \begin{corollary}\label{cr:llsptocone}
    $X=\<X,{\tau}\>$ is an LLSP-space iff 
    there is an LLSP-cone assignment $U:X\to \mc P(X)$
    such that ${\tau}={\tau}_U$.  
      \end{corollary} 
    
\begin{proof}
If $U:X\to \mc P(X)$ is an LLSP-cone assignment, then 
$X=\<X,{\tau}_U\>$ is an LLSP-space by Theorem \ref{tm:llspgen}(1).

On the other hand, if $X$ is an LLSP-space, then 
by Proposition \ref{pr:llspassignment},  there is a 
  neighborhood assignment $U:X\to {\tau}_X$ such that 
  if $p\in I_{\alpha}(X)$ then 
  $U(p)$ is a clopen Lindelöf subset and 
  \begin{displaymath}
  U(p)\setm \{p\}\subs I_{<{\alpha}}(X).
  \end{displaymath}
Then ${\tau}={\tau}_U$ by Theorem \ref{tm:llspgen}(2).
\end{proof}

    \begin{proof}[Proof of Theorem \ref{tm:llspgen}]
      First, we prove (1).   Assume  $U:X\to \mathcal P(X)$ is an LLSP-cone assignment with 
      a witnessing partition
      $\<X_{\alpha}:{\alpha}<{\delta}\>$.
      \begin{claim}
          $\mathbb B_U$ is a 
          neighborhood system of a topology.    
      \end{claim}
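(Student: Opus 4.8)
The plan is to verify the three standard axioms for a neighborhood system: for each $p\in X$, every member of $\mathcal B_U(p)$ contains $p$ and is closed under finite intersection (up to refinement), and the "open set" condition that for each $B\in\mathcal B_U(p)$ and each $q\in B$ there is $B'\in\mathcal B_U(q)$ with $B'\subs B$. First I would check $p\in U(p)\setm U[I]$ for every $I\in[U(p)\setm\{p\}]^\omega$: this is immediate from (a), since $p\in U(p)$ and $p\notin U[I]$ because $p\notin U(x)$ for $x\in X_{<{\beta}}$ (where $p\in X_{\beta}$), again by (a). Closure under finite intersections is easy too: given $I_0,I_1\in[U(p)\setm\{p\}]^\omega$, the set $I=I_0\cup I_1$ is still countable, and $U(p)\setm U[I]=(U(p)\setm U[I_0])\cap(U(p)\setm U[I_1])$, so $\mathcal B_U(p)$ is actually downward directed and $\subs$-closed under finite intersections.

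The substantive point is the compatibility/"open" axiom. Fix $p\in X_{\beta}$, $I\in[U(p)\setm\{p\}]^\omega$, put $B=U(p)\setm U[I]$, and take $q\in B$. I want $J\in[U(q)\setm\{q\}]^\omega$ with $U(q)\setm U[J]\subs B$. Since $q\in B\subs U(p)$ and $q\ne p$ (as $q\notin U[I]$ but if $q=p$... actually $p\in B$ is allowed, so handle $q=p$ trivially by $J=I$), we have the pair $\{p,q\}\in[X]^2$ with $q\in U(p)$; by (d), $D(p,q)\subs U(q)\setm U(p)$ and $U(q)\setm U(p)\subs U[D(p,q)]$. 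Also, for each $x\in I$, either $q\in U(x)$ — impossible since $q\notin U[I]$ — or $q\notin U(x)$; if also $x\notin U(q)$, apply (c) to get $D(x,q)\subs U(x)\cap U(q)$ with $U(x)\cap U(q)\subs U[D(x,q)]$, and if $x\in U(q)$, apply (d) to get $D(x,q)\subs U(q)\setm U(x)$ with $U(q)\setm U(x)\subs U[D(x,q)]$. I then set
\begin{displaymath}
J=D(p,q)\cup\bigcup\{D(x,q):x\in I\}.
\end{displaymath}
This is a countable union of countable sets, hence in $[U(q)]^\omega$, and by construction every element of $J$ lies in $U(q)$ and is $\ne q$ (the relevant $D$-values avoid $q$: for (d)-type, $D\subs U(q)\setm U(\cdot)$ but $q\in U(q)$, so one must check $q\notin D$ — this follows because $D(p,q)\subs U(q)\setm U(p)$ and similar, and in all cases these are subsets of sets not containing... here I'd argue $q\in U(\cdot)$ forces $q\notin D(\cdot,q)$ in the (d)-cases, while in the (c)-case $D(x,q)\subs U(x)$ and $q\notin U(x)$). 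So $J\in[U(q)\setm\{q\}]^\omega$.

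It remains to show $U(q)\setm U[J]\subs U(p)\setm U[I]$, i.e. (i) $U(q)\setm U[J]\subs U(p)$ and (ii) $(U(q)\setm U[J])\cap U[I]=\empt$. For (i): if $z\in U(q)$ and $z\notin U(p)$, then $z\in U(q)\setm U(p)\subs U[D(p,q)]\subs U[J]$, so $z\notin U(q)\setm U[J]$; hence $U(q)\setm U[J]\subs U(q)\cap U(p)\subs U(p)$. For (ii): suppose $z\in U(q)$ and $z\in U(w)$ for some $w\in I$; I claim $z\in U[J]$. Since $w\in I$, we have $w\in U(q)$ or $w\notin U(q)$: in the first case $D(w,q)$ covers $U(q)\setm U(w)$; in the second $D(w,q)$ covers $U(w)\cap U(q)$. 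If $z\in U(w)\cap U(q)$ in the (c)-case then $z\in U[D(w,q)]\subs U[J]$; if $z\in U(q)$ and $z\notin U(w)$ in the (d)-case then $z\in U(q)\setm U(w)\subs U[D(w,q)]\subs U[J]$; and if $z\in U(q)\cap U(w)$ in the (d)-case, then $z\in U(q)$ and $z\in U(w)\setm(U(q)\setm U(w))$... — more carefully, in the (d)-case I should instead use that $z\in U(q)$ and just re-run on the pair $\{w,q\}$: if $w\in U(q)$, note $z\in U(q)$, and I want to conclude $z$ is covered; here the cleanest is to observe $U(q)\cap U(w)\subs U(q)$ and if $z\notin U[D(w,q)]$ then $z\notin U(q)\setm U(w)$, so $z\in U(w)$, consistent, but that doesn't immediately give coverage — so actually I expect the intended reading handles this via a finer case split, possibly relabeling which of $w,q$ is "below" the other and noting that $q\in B\subs U(p)$ while $w\in I\subs U(p)\setm\{p\}$ are at comparable levels. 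This last bookkeeping — ensuring every $z\in U(q)\cap U[I]$ is absorbed into $U[J]$ regardless of the order relation between $q$ and the witnesses in $I$ — is the main obstacle, and I would resolve it by splitting $I$ into $I\cap U(q)$, the part with $q$ in their cone, and the rest, applying (d) to the first group (with roles chosen so the $D$-value covers the symmetric difference containing $z$) and (c) to the second, then checking both sub-cases of where $z$ sits.
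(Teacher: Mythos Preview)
Your overall structure matches the paper's: verify (BP1)--(BP3), with the real content in the ``open'' axiom (BP2), and build $J$ from $D(p,q)$ together with one countable witness per $i\in I$. But the case $w\in I\cap U(q)$ is a genuine gap, and your proposed fix does not work.

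First, you have misquoted (d): with $w\in U(q)$, property (d) gives $D(w,q)\subs U(w)\setm U(q)$ and $U(w)\setm U(q)\subs U[D(w,q)]$, not $D(w,q)\subs U(q)\setm U(w)$. So this $D(w,q)$ is not even contained in $U(q)$, and including it in $J$ would destroy $J\subs U(q)\setm\{q\}$. Second, even with the correct statement, (d) only controls $U(w)\setm U(q)$; it says nothing about $U(q)\cap U(w)$, which is what you need to absorb into $U[J]$. Your suggestion to ``choose roles so the $D$-value covers the symmetric difference containing $z$'' cannot succeed: (d) applies only in the direction dictated by which point lies in the other's cone, and here the only applicable direction gives the wrong set.

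The missing idea is much simpler: when $w\in U(q)$, put $w$ itself into $J$. Since $q\notin U[I]$ we have $w\ne q$, so $w\in U(q)\setm\{q\}$; and trivially $U(q)\cap U(w)\subs U(w)=U[\{w\}]\subs U[J]$. This is exactly how the paper handles it (implicitly): it asserts that for every $i\in I$ there is some countable $D(q,i)\subs U(q)\setm\{q\}$ with $U(q)\cap U(i)\subs U[D(q,i)]$, which in the $i\in U(q)$ case is witnessed by $\{i\}$. With this adjustment --- take $J=D(p,q)\cup\bigcup_{w\in I\setm U(q)}D(w,q)\cup(I\cap U(q))$ --- your verification of (i) and (ii) goes through cleanly.
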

      
      \begin{proof}
      By \cite[Proposition 1.2.3]{En-Gen-Top}, we should check that 
      \begin{enumerate}[(BP1)]
      \item $\forall p\in X$  $\mathcal B_U(p)\ne \empt$ and $p\in \bigcap \mathcal B_U(p)$,
      \item $\forall p\in X$ $\forall q\in V\in \mathcal B_U(p)$ $\exists W\in \mathcal B_U(q)$ $W\subs V$,
      \item $\forall p\in X$ $\forall V_0,V_1\in \mathcal B_U(p)$ $\exists  V\in \mathcal B_U(p)$ $V\subs V_0\cap V_1$.
      \end{enumerate}
      First observe that (BP1) and (BP3) are straightforward from the construction.
      
      To check (BP2) assume that $q\in U(p)\setm U[I]\in \mathcal B_U(p)$, where  
      $I\in {[U(p)\setm \{p\}]}^{{\omega}}$.
      
      Then there is $D(q,p)\in {[U(q)\setm U(p)]}^{{\omega}}$ such that
      $U(q)\setm U(p)\subs U[D(q,p)]$
      and for each $i\in I$ there is $D(q,i)\in {[U(q)\setm \{q\}]}^{{\omega}}$ such that
      $U(q)\cap U(i)\subs U[D(q,i)]$. 
      Then 
      \begin{multline*}
      U(q)\setm (U(p)\setm U[I])\subs (U(q)\setm U(p))\cup (U(q)\cap U[I])\subs \\
      (U(q)\setm U(p))\cup \bigcup_{i\in I}(U(q)\cap U(i))\subs U[D(q,p)]\cup 
      \bigcup_{i\in I} U[D(q,i)]
      =\\ U[D(q,p)\cup \bigcup_{i\in I}D(q,i)].
      \end{multline*}
     Thus, writing $J=D(q,p)\cup \bigcup_{i\in I}D(q,i)$ we have   
      \begin{displaymath}
      U(q)\setm U[J]\subs U(p)\setm U[I],
      \end{displaymath} 
      and $U(q)\setm U[J]\in \mathcal B_U(q)$.
      
      So $\mathbb B_U$ is a  neighborhood system of a topology ${\tau}_U$.
      \end{proof}  
      \begin{claim}
          $\<X,{\tau}_U\>$ is $T_2$.
      \end{claim}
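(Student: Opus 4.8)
The plan is to establish the Hausdorff property directly: given distinct $x,y\in X$, I would exhibit disjoint $V_x\in\mathcal B_U(x)$ and $V_y\in\mathcal B_U(y)$, splitting into two cases according to whether one of the two points lies in the cone of the other. I expect that only the conditions \eqref{cone:a}, \eqref{cone:c} and \eqref{cone:d} of a weak LLSP-cone assignment will be needed. First, suppose $x\notin U(y)$ and $y\notin U(x)$, so that \eqref{cone:c} applies to the pair $\{x,y\}$: it yields $D(x,y)\subs U(x)\cap U(y)$ with $U(x)\cap U(y)\subs U[D(x,y)]$. The routine observations to record are that $x\notin D(x,y)$ (since $x\notin U(y)$, hence $x\notin U(x)\cap U(y)\supseteq D(x,y)$) and likewise $y\notin D(x,y)$, so $D(x,y)$ is simultaneously an admissible countable index set from $U(x)\setm\{x\}$ and from $U(y)\setm\{y\}$. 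Then I would set $V_x=U(x)\setm U[D(x,y)]\in\mathcal B_U(x)$ and $V_y=U(y)\setm U[D(x,y)]\in\mathcal B_U(y)$ and note that $V_x\cap V_y\subs(U(x)\cap U(y))\setm U[D(x,y)]=\empt$.

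In the remaining case one of $x\in U(y)$, $y\in U(x)$ holds, and by symmetry I may assume $x\in U(y)$; write $x\in X_\al$ and $y\in X_\be$. By \eqref{cone:a}, $x\in U(y)\setm\{y\}\subs X_{<\be}$, so $\al<\be$, and a second application of \eqref{cone:a} gives $U(x)\setm\{x\}\subs X_{<\al}$; since $\al<\be$, neither $x$ nor any point of $U(x)\setm\{x\}$ lies in $X_\be$, whence $y\notin U(x)$. I would then take $V_x=U(x)$, which is $U(x)\setm U[\empt]\in\mathcal B_U(x)$, and $V_y=U(y)\setm U[\{x\}]=U(y)\setm U(x)$, which belongs to $\mathcal B_U(y)$ because $\{x\}$ is a finite, hence admissible, subset of $U(y)\setm\{y\}$. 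These work: $y\in V_y$ because $y\in U(y)$ and $y\notin U(x)$, and $V_x\cap V_y=U(x)\cap(U(y)\setm U(x))=\empt$.

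The step I expect to require the most care is this second case: once $x\in U(y)$, the cone $U(y)$ is not itself a neighborhood of $y$ disjoint from a neighborhood of $x$, so one is forced to remove a single cone $U[\{x\}]=U(x)$ from $U(y)$ — legitimate precisely because singletons are admissible index sets — and then one must appeal to the level structure via \eqref{cone:a} to be sure that $y$ is not thereby deleted. Everything else is an immediate manipulation of the defining inclusions for the LLSP-function $D$, and none of it uses \eqref{cone:b}; in particular the argument works already for weak LLSP-cone assignments.
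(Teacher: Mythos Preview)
Your proof is correct and follows essentially the same approach as the paper's: the same two-case split, the same choice of separating neighborhoods $U(x)\setm U[D(x,y)]$, $U(y)\setm U[D(x,y)]$ in the incomparable case and $U(x)$, $U(y)\setm U(x)$ in the comparable case. You supply more detail than the paper does---in particular, you explicitly verify via \eqref{cone:a} that $y\notin U(x)$ when $x\in U(y)$, and that $D(x,y)$ avoids $x$ and $y$---points the paper leaves implicit or absorbs into the earlier verification of (BP1).
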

      
      \begin{proof}
      Assume that $\{x,y\}\in {[X]}^{2}$.
      
      If $x\notin U(y)$ and $y\notin U(x)$, then 
      $U(x)\cap U(y)\subs U[D(x,y)]$, and  
      $U_x=U(x)\setm U[D(x,y)]\in \mathcal B_U(x)$
      and 
      $U_y=U(y)\setm U[D(x,y)]\in \mathcal B_U(y)$
      and $U_x\cap U_y=\empt$.
      
      If $x\in U(y)$, then 
      $U(y)\setm U(x)\in \mathcal B_U(y)$, $U(x)\in \mathcal B_U(x)$,
and $(U(y)\setm U(x))\cap U(x)=\empt.$
      \end{proof}

      The topology  ${\tau}_U$ is clearly  a $P$-space topology.

        \begin{claim}
      
       For each ${\alpha}<{\delta}$ and for each $p\in X_{\alpha}$ the subspace
        $U(p)$ is Lindelöf.    
       \end{claim}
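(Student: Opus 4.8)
The plan is to prove by induction on ${\alpha}<{\delta}$ that $U(p)$ is a Lindelöf subspace of $X_U$ for every $p\in X_{\alpha}$. The inductive step will be the standard ``locally compact'' argument with \emph{finite} replaced by \emph{countable}, but it rests on a fact that is automatic in the compact setting and is \emph{not} here: that each $U(q)$ is closed in $X_U$. So I would first check that $U(q)$ is clopen for every $q\in X$. Openness is immediate, since $U(q)=U(q)\setm U[\empt]\in\mathcal B_U(q)$. For closedness, fix $x\in X\setm U(q)$ (so $x\ne q$); I must produce a member of $\mathcal B_U(x)$ disjoint from $U(q)$. If $q\notin U(x)$, then clause \eqref{cone:c} applies to $\{x,q\}$, giving $D(x,q)\subs U(x)\cap U(q)\subs U[D(x,q)]$; since $D(x,q)\subs U(q)$ while $x\notin U(q)$, we get $D(x,q)\in{[U(x)\setm\{x\}]}^{{\omega}}$, so $U(x)\setm U[D(x,q)]\in\mathcal B_U(x)$, and it misses $U(q)$ because $U(x)\cap U(q)\subs U[D(x,q)]$. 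If instead $q\in U(x)$, then $q\in U(x)\setm\{x\}$, hence $\{q\}\in{[U(x)\setm\{x\}]}^{{\omega}}$ and $U(x)\setm U[\{q\}]=U(x)\setm U(q)\in\mathcal B_U(x)$ is already disjoint from $U(q)$. (Only the weak cone axioms are used here.)

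For the induction, let $p\in X_{\alpha}$ and assume $U(q)$ is Lindelöf whenever $q\in X_{\beta}$ with ${\beta}<{\alpha}$. Let $\mathcal G$ be a family of ${\tau}_U$-open sets with $U(p)\subs\bigcup\mathcal G$. Pick $G_0\in\mathcal G$ with $p\in G_0$; since $\mathbb B_U$ is the neighborhood system of ${\tau}_U$, there is $I_0\in{[U(p)\setm\{p\}]}^{{\omega}}$ with $U(p)\setm U[I_0]\subs G_0$, whence
\begin{displaymath}
U(p)\setm G_0\ \subs\ U(p)\cap U[I_0]\ =\ \bigcup_{i\in I_0}\bigl(U(p)\cap U(i)\bigr).
\end{displaymath}
By clause \eqref{cone:a}, $I_0\subs U(p)\setm\{p\}\subs X_{<{\alpha}}$, so each $i\in I_0$ lies in some $X_{{\beta}_i}$ with ${\beta}_i<{\alpha}$, and $U(i)$ is Lindelöf by the inductive hypothesis. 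Since $U(p)$ is closed, each $U(p)\cap U(i)$ is a closed subspace of the Lindelöf space $U(i)$, hence Lindelöf; as $\mathcal G$ covers $U(p)\cap U(i)$, choose a countable $\mathcal G_i\subs\mathcal G$ covering it. Then $\{G_0\}\cup\bigcup_{i\in I_0}\mathcal G_i$ is a countable subfamily of $\mathcal G$ covering $G_0\cup\bigcup_{i\in I_0}(U(p)\cap U(i))\supseteq U(p)$, the desired countable subcover. The argument applies uniformly, with ${\alpha}=0$ forcing $I_0=\empt$.

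I expect the only genuine obstacle to be the closedness of $U(p)$: in a locally compact Hausdorff space the corresponding clopen pieces are compact and therefore closed for free, whereas Lindelöfness yields nothing of the kind, so closedness must be read off axioms \eqref{cone:c} and \eqref{cone:d} directly. Once that is in place the induction needs only two soft facts — that a countable union of Lindelöf subspaces is Lindelöf, and that a closed subspace of a Lindelöf space is Lindelöf — together with axiom \eqref{cone:a}, which confines $U(p)$ to strictly lower levels.
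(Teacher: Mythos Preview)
Your proof is correct and follows essentially the same inductive route as the paper: pick a basic neighborhood of $p$ inside one member of the cover, push the remainder into $U[I]$ for a countable $I\subs U(p)\setm\{p\}$, apply the inductive hypothesis to the $U(i)$'s, and use that closed subspaces of Lindelöf spaces are Lindelöf. The one place you are more careful than the paper is exactly where you anticipated: the paper writes ``$U(p)\setm V$ is also Lindelöf because it is a closed subset of the Lindelöf space $U[I]$'' without having isolated the fact that $U(p)$ (hence $U(p)\setm V$) is closed in $X_U$; your explicit derivation of closedness from axioms \eqref{cone:c} and \eqref{cone:d} supplies that step, and indeed the ingredients for it are already implicit in the paper's Hausdorff argument.
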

      
      \begin{proof}[Proof]
      We prove it by transfinite induction on ${\alpha}$.
      
      Assume that  $U(q)$ is Lindelöf for each ${\beta}<{\alpha}$ and $q\in X_{\beta}$.
      
      Let $\mathcal V$ be an open cover of $U(p)$. Pick $V\in \mathcal V $ with $p\in V$.
      Since $\mathcal B_U(p)$ is a neighborhood base of $p$, we have $U(p)\setm U[I]\subs V$ for some $I\in {[U(p)\setm \{p\}]}^{{\omega}}$.
      Then $U(p)\setm V\subs U[I]$, and $U[I]$ is Lindelöf because it is the union of 
      countable many Lindelöf subsets. So $U(p)\setm V$ is also Lindelöf because it is 
      a closed subset of the Lindelöf space $U[I]$. Thus, 
      there is a countable $\mathcal W\in {[\mathcal V]}^{{\omega}}$
      such that $U(p)\setm U(q)\subset \bigcup \mathcal{W}$.
      Thus, $U(p)\subs V\cup \bigcup \mathcal{W}$. 
      
      Since $\mathcal V$ was arbitrary, we proved that $U(p)$ is Lindelöf.
      \end{proof}
      
      \begin{claim}
       $X_U$ is scattered and $I_{\le\alpha}(X)\supset X_{\alpha}$.
       Moreover, if $U$ is an LLSP-cone assignment, then    
       $I_{\alpha}(X)= X_{\alpha}$.
      \end{claim}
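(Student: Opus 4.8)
The plan is to prove all three assertions by a single transfinite induction on $\alpha<\delta$, using only the earlier Claims (that $\mathbb B_U$ is a $T_2$ $P$-space neighbourhood system) together with condition \eqref{cone:a} and, for the last statement, condition \eqref{cone:b}. For the two weaker assertions I would show by induction on $\alpha$ that $X_{\le\alpha}\subseteq I_{\le\alpha}(X_U)$; since $X=\bigcup\{X_\alpha:\alpha<\delta\}$, this yields $X=\bigcup\{I_\alpha(X_U):\alpha<\delta\}$, so $X_U$ is scattered, and the inclusion $I_{\le\alpha}(X_U)\supseteq X_{\le\alpha}$ is immediate.

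The inductive step rests on a single observation coming from \eqref{cone:a}: if $p\in X_\alpha$ then $U(p)\setminus\{p\}\subseteq X_{<\alpha}$, so \emph{every} basic neighbourhood $V=U(p)\setminus U[I]\in\mathcal B_U(p)$ of $p$ has $V\setminus\{p\}\subseteq X_{<\alpha}$. Assuming inductively that $X_{<\alpha}\subseteq I_{<\alpha}(X_U)$ — the previous instance at a successor step, and a union of previous instances at a limit step, using $X_{<\alpha}=\bigcup_{\beta<\alpha}X_{\le\beta}$ and $I_{<\alpha}(X_U)=\bigcup_{\beta<\alpha}I_{\le\beta}(X_U)$ — we get that either $p\in I_{<\alpha}(X_U)$, or $p\in X_U\setminus I_{<\alpha}(X_U)$ and then $V\cap(X_U\setminus I_{<\alpha}(X_U))=\{p\}$ for every $V\in\mathcal B_U(p)$; as $\mathcal B_U(p)\ne\empt$ by (BP1), $p$ is isolated in $X_U\setminus I_{<\alpha}(X_U)$, i.e. $p\in I_\alpha(X_U)$. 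Either way $p\in I_{\le\alpha}(X_U)$, so $X_\alpha\subseteq I_{\le\alpha}(X_U)$, which together with the hypothesis closes the induction.

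For the \emph{moreover} part I would rerun the induction with the stronger hypothesis $I_\beta(X_U)=X_\beta$ for all $\beta<\alpha$, and prove $I_\alpha(X_U)=X_\alpha$; the hypothesis gives $I_{<\alpha}(X_U)=X_{<\alpha}$. The inclusion $X_\alpha\subseteq I_\alpha(X_U)$ is obtained as above (now $p\notin X_{<\alpha}=I_{<\alpha}(X_U)$, and any $V\in\mathcal B_U(p)$ exhibits $p$ as isolated in $X_U\setminus X_{<\alpha}$). For the converse, let $p\in I_\alpha(X_U)$; then $p\notin I_{<\alpha}(X_U)=X_{<\alpha}$, hence $p\in X_\beta$ for some $\beta\ge\alpha$, and it remains to exclude $\beta>\alpha$. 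Isolation of $p$ in $X_U\setminus X_{<\alpha}$ provides, after unwinding the subspace topology, a basic neighbourhood $V=U(p)\setminus U[I]$ with $I\in{[U(p)\setminus\{p\}]}^{\omega}$ and $V\cap(X_U\setminus X_{<\alpha})=\{p\}$; since $p\in X_\beta$ and $\beta\ne\alpha$ this forces $(U(p)\setminus U[I])\cap X_\alpha=\empt$, which for $\alpha<\beta$ flatly contradicts \eqref{cone:b} applied to $\alpha<\beta$, the point $p$, and the set $I$. Hence $\beta=\alpha$.

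I expect the only real subtlety to be the bookkeeping in this last induction: condition \eqref{cone:b} can be invoked only once one knows that deleting $I_{<\alpha}(X_U)$ is the same as deleting $X_{<\alpha}$, so the equalities $I_\beta(X_U)=X_\beta$ must be carried simultaneously for all $\beta<\alpha$ as the inductive hypothesis, rather than deduced level by level from the weak statement. Apart from that, the limit stages reduce to the routine set-union identities above, and no property of $\tau_U$ beyond what the earlier Claims provide is needed.
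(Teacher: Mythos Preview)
Your proposal is correct and follows essentially the same approach as the paper: the same transfinite induction using \eqref{cone:a} for the inclusion $X_\alpha\subseteq I_{\le\alpha}(X_U)$ and \eqref{cone:b} for the reverse direction under the stronger hypothesis. You are in fact a bit more careful than the paper in distinguishing the case $p\in I_{<\alpha}(X_U)$ and in phrasing the second part as a contrapositive, but the underlying argument is the same.
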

      
      \begin{proof}
      We prove the statement by induction on ${\alpha}$.
      
      Assume that $I_{\le \beta}(X)\supset X_{\beta}$ for ${\beta}<{\alpha}$.
      
      If $p\in X_{\alpha}$, then $U(p)\setm \{p\}\subs X_{<{\alpha}}\subs I_{<{\alpha}}(X)$
      by \ref{df:llsp-cone}\eqref{cone:a} and by the inductive assumption,
      so $p$ is isolated in $X\setm I_{<{\alpha}}(X)$, and so 
      $p\in I_{\alpha}(X)$.

Assume now that       $U$ is an LLSP-cone assignment and 
$I_{\beta}(X) =X_{\beta}$ for ${\beta}<{\alpha}$.
  If $p\in X_{\gamma}$ for some ${\gamma}>{\alpha}$ and 
      $I\in [U(p)\setm \{p\}]^{\omega}$, then 
      $(U(q)\setm U[I])\cap X_{<\alpha}\ne \empt$ by \ref{df:llsp-cone}\eqref{cone:b}, so 
      $p$ is not isolated 
      in $X\setm X_{<{\alpha}}=X\setm I_{<{\alpha}}(X)$, so 
      $p\notin I_{\alpha}(X)$. 
      
      So $I_{\alpha}(X)=X_{\alpha}$.
    \end{proof}

      \medskip Now, we prove (2).
      So, assume that $X$ and $U$ satisfy the hypothesis of (2). Then, we show that 
      $U$ is an LLSP-cone assignment with the witnessing partition 
      $\<I_{\alpha}(X):{\alpha}<{\delta}\>$.
      Property \eqref{cone:a} follows from $(\dag)$.
      
      To check \eqref{cone:b}
      assume that 
      ${\alpha}<{\beta}<{\delta}$,  $p\in X_{\beta}$  and 
      $I\in {[U(p)\setm\{p\}]}^{{\omega}}$. 
      
      Then $U(p)\setm U[I]=\bigcap_{i\in I}(U(p)\setm U(i))$,
      so $G=U(p)\setm U[I]$ is an open set because $X$ is a P-space, and clearly 
       $p\in G$. Since $I_{\alpha}(X)$ is dense in $X\setm I_{<{\alpha}}(X)$, it follows
       that $G\cap I_{\alpha}(X)\ne \empt$.  
      
       To check \eqref{cone:c}  assume that $\{x,y\}\in {[X]}^{2}$
       with  $x\notin U(y)$ and $y\notin U(x)$.
       
       Then 
       \begin{displaymath}
       U(x)\cap U(y)\subs\bigcup\{U(i): i\in U(x)\cap U(y)\}.
       \end{displaymath}
       Since $U(x)\cap U(y)$ is Lindelöf, there is a countable   
      $D(x,y)\in {[U(x)\cap U(y)]}^{{\omega}}$ such that 
       \begin{displaymath}
       U(x)\cap U(y)\subs U[D(x,y)].
      \end{displaymath}

      To check \eqref{cone:d}  assume that $\{x,y\}\in {[X]}^{2}$
      with  $x\in U(y)$. 
      
      Then 
      \begin{displaymath}
      U(x)\setm  U(y)\subs\bigcup\{U(i): i\in U(x)\setm  U(y)\}.
      \end{displaymath}
      Since $U(x)\setm U(y)$ is Lindelöf, there is a countable 
      $D(x,y)\in {[U(x)\setm U(y)]}^{{\omega}}$ such that 
      \begin{displaymath}
      U(x)\setm U(y)\subs U[D(x,y)].
      \end{displaymath}
      
      Finally, we should show that ${\tau}_U={\tau}$.
      
      Since ${\tau}$ is a P-space topology, 
      $U(p)\setm U[I]$ is open for each 
      $p\in X$ and $I\in {[U(p)\setm \{p\}]}^{{\omega}}$, and so 
      ${\tau}_U\subs {\tau}$.
      
      We know that ${\tau}$ is 0-dimensional, so it is enough to show 
      that if $G\in {\tau}$ is clopen and Lindelöf, then $G\in {\tau}_U$.
      
      For each $p\in G$,
      \begin{displaymath}
      U(p)\setm G\subs \bigcup\{U(i):i \in U(p)\setm G\},
      \end{displaymath}
      so there is a countable subset $D(p)\in {[U(p)\setm G]}^{{\omega}}$
      such that 
      \begin{displaymath}
          U(p)\setm G\subs \bigcup\{U(i):i \in D_p\}.
          \end{displaymath}
       Then $U(p)\setm U[D_p]\in \mathcal U_B(p)$ and $U(p)\setm U[D_p]\subs G$.
       Since $p$ was arbitrary, we proved that $G$ is open in ${\tau}_U$,
       which completes the proof.

So we proved theorem \ref{tm:llspgen}.
\end{proof}

\section{Basic properties of cardinal sequences of LLSP-spaces}
\label{sc:basic}

\subsection*{Reduction theorem}

Let us recall that $\paseq {\alpha}$
 and $\aseq {\alpha}$
denote the class of all cardinal sequences of
length $\alpha$ associated with LLSP and  LCS spaces, respectively.
For any cardinal  ${\kappa}$ write 
\begin{align*}
\plaseq{{\kappa}}{{\alpha}}=&\{s\in \paseq{{\alpha}}:
{\kappa}=s(0)=\min \ran(s)\},\\
\laseq{{\kappa}}{{\alpha}}=&\{s\in \aseq{{\alpha}}:
{\kappa}=s(0)=\min \ran(s)\}\\
\end{align*} 
In \cite[Theorem 2.1]{JSW} the following statement was proved:

\noindent {\bf Theorem.} {\em 
For any ordinal ${\alpha}$ we have
  $f\in \aseq {\alpha}$ iff for some natural number $n$ there is a
  decreasing sequence $\lambda_0>\lambda_1>\dots>\lambda_{n-1}$ of
  infinite cardinals and there are ordinals ${\alpha}_0,\dots,
  {\alpha}_{n-1}$ such that
  ${\alpha}={\alpha}_0+\cdots+{\alpha}_{n-1}$ and $f=f_0\concat \
  f_1\concat \cdots \ \concat f_{n-1}$ with
  $f_i\in\laseq{\lambda_i}{\alpha_i}$ for each $i < n$.
}

We will prove a similar statement for LLSP-spaces as well

\begin{theorem}\label{mfact} For any ordinal ${\alpha}$ we have
  $f\in \paseq {\alpha}$ iff for some natural number $n$ there is a
  decreasing sequence $\lambda_0>\lambda_1>\dots>\lambda_{n-1}$ of
  infinite cardinals and there are ordinals ${\alpha}_0,\dots,
  {\alpha}_{n-1}$ such that
  ${\alpha}={\alpha}_0+\cdots+{\alpha}_{n-1}$ and $f=f_0\concat \
  f_1\concat \cdots \ \concat f_{n-1}$ with
  $f_i\in\plaseq{\lambda_i}{\alpha_i}$ for each $i < n$.
  \end{theorem}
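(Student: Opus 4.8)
The plan is to mirror the proof of \cite[Theorem 2.1]{JSW}, checking that each step survives the passage from ``locally compact'' to ``locally Lindel\"of $P$-space''; the statement splits into a \emph{decomposition} half (``only if'') and a \emph{reassembly} half (``if'').

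\emph{Decomposition.} Let $X$ be an LLSP space with $\mathrm{CS}(X)=f$ of length $\alpha$. First I would observe that for every $\beta\le\alpha$ both truncations $I_{<\beta}(X)$ and $I_{\ge\beta}(X)=X\setminus I_{<\beta}(X)$ are LLSP spaces: the latter is closed in $X$, the former is open (its complement is a Cantor--Bendixson remainder), and $T_2$, scatteredness and the $P$-space property pass to subspaces; local Lindel\"ofness passes too, using $0$-dimensionality — for $p\in I_\gamma(X)$ with $\gamma<\beta$ the clopen Lindel\"of set $U_X(p)$ is contained in $I_{\le\gamma}(X)\subseteq I_{<\beta}(X)$, and for a closed subspace one intersects $U_X(p)$ with it. Moreover $\mathrm{CS}(I_{<\beta}(X))=f\restriction\beta$ and $\mathrm{CS}(I_{\ge\beta}(X))=\langle f(\beta+\gamma):\beta+\gamma<\alpha\rangle$. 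Now cut greedily: set $\lambda_0=f(0)$; if $f(\gamma)\ge\lambda_0$ for all $\gamma<\alpha$ stop with $n=1$; otherwise let $\alpha_0$ be least with $f(\alpha_0)<\lambda_0$, so $f\restriction\alpha_0=\mathrm{CS}(I_{<\alpha_0}(X))\in\plaseq{\lambda_0}{\alpha_0}$ (its minimum is its first value $\lambda_0$), and recurse on the LLSP space $I_{\ge\alpha_0}(X)$, whose cardinal sequence begins with $\lambda_1:=f(\alpha_0)<\lambda_0$. A strictly decreasing sequence of cardinals is finite, so the recursion halts, and concatenating the pieces gives the desired representation.

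\emph{Reassembly.} Induct on $n$; it suffices to treat $n=2$. So let $Y$ be an LLSP space realizing $f_0\concat\cdots\concat f_{n-2}$, hence with every Cantor--Bendixson level of size $\ge\lambda_{n-2}$, and let $Z$ be an LLSP space realizing $f_{n-1}$, hence with $|I_0(Z)|=\lambda_{n-1}<\lambda_{n-2}$; I must build an LLSP space with cardinal sequence $\mathrm{CS}(Y)\concat\mathrm{CS}(Z)$. Fix LLSP-cone assignments $U_Y,U_Z$ (Proposition \ref{pr:llspassignment}, Theorem \ref{tm:llspgen}(2)), take the underlying set $Y\sqcup Z$, and set $U(q)=U_Y(q)$ for $q\in Y$ and $U(p)=U_Z(p)\cup C_p$ for $p\in Z$, where $C_p\subseteq Y$ is chosen to \emph{meet every level of $Y$ robustly}, i.e. $(C_p\setminus V)\cap I_\eta(Y)\ne\emptyset$ for every clopen Lindel\"of $V\subseteq Y$ and every $\eta<\mathrm{ht}(Y)$. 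Then $U$ is an LLSP-cone assignment with witnessing partition $\langle I_\eta(Y):\eta<\mathrm{ht}(Y)\rangle\concat\langle I_\eta(Z):\eta<\mathrm{ht}(Z)\rangle$, and Theorem \ref{tm:llspgen}(1) finishes the job: \eqref{cone:a} is immediate; the cross cases of \eqref{cone:c}--\eqref{cone:d} reduce to covering a clopen Lindel\"of subspace of $Y$ (e.g. $U_Y(x)\cap C_y$ or $U_Y(x)\setminus C_y$) by countably many $U$-images — possible because that subspace is Lindel\"of and is covered by the sets $U_Y(r)$ as $r$ ranges over its own points — while the remaining cases are inherited from $U_Y,U_Z$; and \eqref{cone:b} is inherited from $U_Y$ and $U_Z$ except, for a $Z$-point against a $Y$-level, where it is exactly the robustness of $C_p$ (deleting $U[I]$ with $I$ countable deletes only a clopen Lindel\"of subset of $Y$, since in a $P$-space a countable union of clopen sets is clopen).

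\emph{The obstacle.} Everything hinges on choosing the $C_p$ and checking \eqref{cone:c}--\eqref{cone:d} between $Z$-points. If $\mathrm{ht}(Y)$ is a successor, take $C_p=U_Y(t)$ for one fixed $t$ on the top level of $Y$; if $cf(\mathrm{ht}(Y))=\omega$, take $C_p=\bigcup_k U_Y(t_k)$ for one fixed cofinal $\omega$-chain — still clopen (its complement is a countable intersection of open sets) and Lindel\"of; in both cases robustness is just \eqref{cone:b} for $U_Y$ and all $C_p$ coincide, so the $Z$--$Z$ instances are handled exactly as above. The hard case is $cf(\mathrm{ht}(Y))\ge\omega_1$: now $Y$ has \emph{no} clopen Lindel\"of subspace meeting all of its levels — such a set would be a strictly increasing union of $cf(\mathrm{ht}(Y))$ many proper clopen sets, hence not Lindel\"of — so each $C_p$ must be a non-Lindel\"of ``cofinal tower'' $\bigcup_\xi U_Y(t^p_\xi)$, and for \eqref{cone:c}--\eqref{cone:d} with distinct $p,p'\in Z$ the towers must be chosen almost disjointly (with clopen Lindel\"of pairwise intersections) and coherently along the cone structure of $Z$. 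This is exactly where the strict inequality $\lambda_{n-1}<\lambda_{n-2}$ enters: every level of $Y$ has more than $|I_0(Z)|$ points, leaving a fresh point to continue each tower at each level; it is convenient to first replace $Y$ by an LLSP space with the same cardinal sequence obtained from a cone assignment that already carries a coherent system of such towers (legitimate since $f_0\concat\cdots\concat f_{n-2}$ is assumed realizable). Carrying out this almost-disjoint recursive construction and verifying that it respects \eqref{cone:a}--\eqref{cone:b} is, I expect, the only genuinely delicate point of the proof.
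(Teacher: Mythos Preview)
Your decomposition is correct and matches the paper's (the cut points are called ``drop points'' there, and each block is realized as a closed subspace of an open subspace of $X$). Your reassembly, however, has a gap that is not confined to the $\operatorname{cf}(\operatorname{ht}(Y))\ge\omega_1$ case you flag. You propose taking all $C_p$ equal to a single clopen Lindel\"of set $C\subseteq Y$ in the successor and $\operatorname{cf}=\omega$ cases; but condition \eqref{cone:b} must be checked against \emph{every} countable $I\subseteq U(p)\setminus\{p\}$, and if $p$ lies above level $0$ of $Z$ then $I$ may contain some $i\in U_Z(p)\cap Z$, whence $U[I]\cap Y\supseteq C_i=C=C_p$ and $(U(p)\setminus U[I])\cap Y=\emptyset$, so \eqref{cone:b} fails at every $Y$-level. (Your stated robustness condition is in fact impossible for any clopen Lindel\"of $C_p$: take $V=C_p$.) Thus the $C_p$'s must already vary enough that $C_p\setminus C_i$ meets every level of $Y$ whenever $i\in U_Z(p)$; the ``coherent almost-disjoint tower'' problem you defer to the hard case is present in \emph{every} case, and you have not carried it out in any of them.

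The paper (following \cite{JSW} verbatim) sidesteps this by reversing the direction of the gluing: rather than placing one top on one bottom, take the top space $Z$ and hang a fresh disjoint copy $X_y$ of the bottom space beneath each $y\in I_0(Z)$, setting $U(p)=U_Z(p)\cup\bigcup\{X_y:y\in I_0(U_Z(p))\}$ for $p\in Z$ and $U(q)=U_{X_y}(q)$ for $q\in X_y$. Each $Z$-cone now contains whole copies of the bottom space, so for \eqref{cone:b} at a bottom level one uses \eqref{cone:b} for $U_Z$ to find an uncovered $y\in I_0(Z)$ below $p$ and then takes any point of $X_y$ on the desired level; conditions \eqref{cone:c}--\eqref{cone:d} reduce directly to those of $U_Z$, since each $X_y$ lies entirely inside or entirely outside each $U(p)$ and the $X_y$'s are pairwise disjoint. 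The hypothesis $\min f\ge g(0)$ is used only afterwards, to see that the combined level $\zeta$ has cardinality $|I_0(Z)|\cdot f(\zeta)=g(0)\cdot f(\zeta)=f(\zeta)$.
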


  \begin{proof}
  We first prove that the condition is necessary, so fix $f\in\paseq
  {\alpha}$. Let us say that $\beta < \alpha$ is a {\em drop point} in
  $f$ if for all $\gamma < \beta$ we have $f(\gamma) > f(\beta)$.
  Clearly $f$ has only finitely many, say $n,$ drop points; let
  $\{\beta_i : i < n\}$ enumerate all of them in increasing order. (In
  particular, we have then $\beta_0 = 0.$) For each $i < n$ let us set
  $\lambda_i = f(\beta_i)$, then we clearly have
  $\lambda_0>\lambda_1>\dots>\lambda_{n-1}.$
  
  For each $i < n-1$
  let $\alpha_{i} = \beta_{i+1} - \beta_i $ be the unique ordinal such
  that $\beta_{i}+\alpha_{i}= \beta_{i+1}$,
  moreover define the sequence $f_i$ on $\alpha_i$ by the stipulation
  $$f_i(\xi) = f(\beta_i + \xi)$$ for all $\xi < \alpha_i.$
  Similarly, let $\alpha_{n-1} = \alpha - \beta_{n-1}$
  be the unique ordinal such that $\beta_{n-1}+\alpha_{n-1}=\alpha,$
  and define $f_{n-1}$ on $\alpha_{n-1}$ by the stipulation
  $$f_{n-1}(\xi) = f(\beta_{n-1} + \xi)$$ for all $\xi < \alpha_{n-1}.$
  Now, it is obvious that we have $f=f_0\concat \ f_1\concat \cdots \
  \concat f_{n-1}$. Moreover,  $f_i\in\plaseq{\lambda_i}{\alpha_i}$ for each
  $i < n$. Indeed, writing
  \begin{displaymath}
  Y_i=\bigcup_{{\beta}_i\le {\xi}<{\beta}_{i+1}}I_{\xi}(X)
  \end{displaymath} 
  we have $f_i=\SEQ(Y_i)$, 
and $Y_i$ is an LLSP-space because it is a closed subset of the open
subspace $I_{<{\beta}_{i+1}}(X)$ of $X$.

  We shall now prove that the condition is also sufficient. In fact,
  this will follow from the next lemma.
  
  \begin{lemma}\label{pr:conc}
    If  $f\in \paseq {\alpha}$, $g\in \paseq {\beta}$, and
    $f({\nu})\ge g(0)$ for each ${\nu}<{\alpha}$ then
    $f{}\concat  g\in \paseq {{\alpha}+{\beta}}$.
    \end{lemma}

    Indeed, given the sequences $f_i$ for $i < n$, this lemma enables us
to inductively define for every $i=0,\dots,n-1$ an LCS space $Z_i$
with cardinal sequence $f_0\concat \cdots \concat f_i$ because
\begin{displaymath}
f_i(0)={\lambda}_i<{\lambda}_{i-1}=\min\{f_j({\nu}):j<i, {\nu}<{\alpha}_j\}.
\end{displaymath}
In particular, then we have $\SEQ(Z_{n-1})=f_0\concat \ f_1\concat
\cdots \ \concat f_{n-1}$.
\end{proof}

    \begin{proof}[Proof of lemma \ref{pr:conc}]
    Let $Y$ be an LLSP space with cardinal sequence $g$ and satisfying
    $\lev {\beta}Y= \emptyset$. Next
     fix LLSP spaces
     $X_y$ for all
    $y\in \lev {0}{Y}$, each having the cardinal sequence $f$ and
    satisfying $ \lev {\alpha}{X_y}=\empt$. Assume also that the
    family $\{Y\}\bigcup\{X_y : y \in \lev {0}Y\}$ is disjoint.

     We then define
    the space $Z=\<Z,{\tau}\>$ as follows. 
    \begin{displaymath}
      Z=Y\cup\bigcup\{X_y: y\in I_0(Y) \}.
      \end{displaymath}

    Let $U_Y$ be an LLSP-cone assignment for $Y$,
    and $U_p$ be an LLSP-cone assignment 
    for $X_p$ for 
    $p\in \lev 0Y$.

    Define $U:Z\to \mc P(Z)$ as follows:
    \begin{displaymath}
    {U(z)}=\left\{\begin{array}{ll}
    {U_p(z)}&\text{if $z\in X_p$}\\\\
    {U_Y(z)\cup \bigcup\{X_p:p\in \lev 0{U_Y(z)}\} }&\text{if $z\in Y$}
    \end{array}\right.
    \end{displaymath}
    
    Then $U$ is an LLSP-cone assignment.
    
    Consider the space $X_U$.
    
    Then $\bigcup_{p\in \lev 0Y}X_p$ is dense open in $X_U$, so 
    for ${\zeta}<{\alpha}$ we have 
    \begin{displaymath}
    \lev {\zeta}{X_U}=\bigcup\{\lev {\zeta}{X_p}:p\in \lev 0Y\}
    \end{displaymath} 
    
    So $\lev {<{\alpha}}{X_U}=\bigcup \{X_p:p\in \lev 0Y\}$.
    
   Thus, for ${\zeta}<{\beta}$ 
    \begin{displaymath}
    \lev {{\alpha}+{\zeta}}{X_U}=\lev {\zeta}Y.
    \end{displaymath}
    So $SEQ(X_U)=f\concat g$.
    
    \end{proof}

\subsection*{Sequence restrictions}

In  \cite[Lemmas 1-3.]{JW}
the authors proved the following statement:

\noindent {\bf Theorem.} {\em 
Assume that $s\in \aseq {\beta}$.
\begin{enumerate}[(1)]
\item   $|{\beta}|\le {2^{s(0)}}$
and $s({\alpha})\le 2^{s(0)}$ for each ${\alpha}<{\beta}$.
\item    if  ${\alpha}+1<{\beta}$ then
$s({\alpha}+1)\le s({\alpha})^{\omega}$.
\item    If  $\delta<\beta$ is a limit
ordinal, and $C$ is any cofinal subset of $\delta$, then
\[
s({\delta})\le\prod\{s(\alpha):\alpha\in C\}\ .
\]
\end{enumerate}
}

We can prove the following version of these statements for LLSP spaces.

\begin{theorem}\label{tm:restrictions}
  Assume that $s\in \paseq {\beta}$.
\begin{enumerate}[(1)]
\item   $|{\beta}|\le {2^{s(0)}}$
and $s({\alpha})\le 2^{s(0)}$ for each ${\alpha}<{\beta}$.
\item   \label{Lemma1}  if  ${\alpha}+1<{\beta}$ then
$s({\alpha}+1)\le s({\alpha})^{\omega_1}$.
\item   \label{Lemma2} If  $\delta<\beta$ is a limit
ordinal with $cf({\delta})>{\omega}$ and $C$ is any cofinal subset of $\delta$, then
\[
s({\delta})\le\prod\{s(\alpha):\alpha\in C\}\ .
\]
\end{enumerate}
\end{theorem}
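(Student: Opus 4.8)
The plan is to exploit the LLSP-cone representation from Corollary~\ref{cr:llsptocone}. Fix an LLSP space $X$ with $\SEQ(X)=s$, and by Proposition~\ref{pr:llspassignment} fix an LLSP neighborhood assignment $U=U_X$; by Theorem~\ref{tm:llspgen}(2) this is an LLSP-cone assignment with witnessing partition $\<I_{\al}(X):\al<{\be}\>$, so each $U(p)$ is clopen, Lindelöf, and $U(p)\setm I_{<\al}(X)=\{p\}$ when $p\in I_\al(X)$. The key combinatorial fact I would use throughout is that a clopen Lindelöf subspace $G$ of a $P$-space is covered by countably many of the $U(q)$'s for $q\in G$ (as in the proof of Theorem~\ref{tm:llspgen}), hence $|G\cap I_\al(X)|\le{\om}_1\cdot\sup\{|U(q)\cap I_\al(X)|:q\in G\}$; combined with the observation that for $q\in I_{\ga}(X)$ we have $|U(q)\cap I_\al(X)|\le|I_\al(X)|$ but more usefully $U(q)\subs I_{\le\ga}(X)$, this lets me push cardinality bounds down the Cantor–Bendixson levels.

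For part (1), I would first bound $s(0)$'s role: $I_0(X)$ is a discrete (open) subspace, and since $X$ is covered by the Lindelöf sets $U(p)$ for $p\in I_0(X)$... rather, I would argue as in the LCS case of Juhász–Weiss. Pick any $p_0\in I_0(X)$; working inside the clopen Lindelöf $U(p)$ for $p\in I_\al(X)$ one shows $|U(p)|\le 2^{s(0)}$ by a tree/embedding argument: assign to each point of $U(p)$ a subset of $I_0(X)\cap U(p)$ via the trace of its minimal neighborhoods, and check this is injective using $T_2$ together with property~(b) of the cone assignment (which guarantees that the $\om_1$-traces separate points at level $0$). Then $|I_\al(X)|\le|I_0(X)|^{?}$... more carefully, since $X=\bigcup\{U(p):p\in I_0(X)\}$ need not hold, instead observe $X$ is locally Lindelöf, so fix one Lindelöf clopen $W\ni$ an arbitrary point; each level meets $W$ in a set of size $\le 2^{s(0)}$, and there are at most $2^{s(0)}$ such $W$'s needed to cover each level — giving $s(\al)\le 2^{s(0)}$ — and the number of levels is $|\be|\le 2^{s(0)}$ since the levels stratify a set of that size locally. (I expect this part to need the most care in getting the covering bookkeeping right.)

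For part (2), suppose ${\al}+1<{\be}$ and take $q\in I_{\al+1}(X)$. Then $U(q)\setm I_{<\al+1}(X)=\{q\}$, so $U(q)\cap I_\al(X)$ is a clopen Lindelöf subset of the subspace $I_{\le\al}(X)$ whose complement in $U(q)$ (within $I_{\le\al}$) is trivial; the minimal neighborhoods of $q$ are exactly $U(q)\setm U[I]$ for $I\in[U(q)\setm\{q\}]^{\om}$, and since $\{q\}=\bigcap\mc B_U(q)$ and the space is $T_2$, distinct $q,q'\in I_{\al+1}(X)$ are separated by such sets; tracing each $q$ to the filter on $I_\al(X)$ generated by $\{U(q)\cap I_\al(X)\setm U[I]:I\in[U(q)\setm\{q\}]^\om\}$ gives an injection of $I_{\al+1}(X)$ into a family of size $\le s(\al)^{\om_1}$ (here $\om_1$, not $\om$, appears precisely because removing a \emph{countable} $U[I]$ from the Lindelöf $U(q)\cap I_\al(X)$ only shrinks it by a Lindelöf piece, so the relevant "traces" live in $({[s(\al)]}^{\le\om_1})$-many pieces — this is exactly the gap flagged in the introduction after Theorem~\ref{Theorem 1}). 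For part (3), with $\de<\be$ of uncountable cofinality and $C$ cofinal, any $q\in I_\de(X)$ has $U(q)$ clopen Lindelöf with $U(q)\cap I_{<\de}(X)$ meeting each $I_\al(X)$, $\al\in C$; since $cf(\de)>\om$, the Lindelöf set $U(q)\setm\{q\}$ cannot be cofinal through all the $\al\in C$ unboundedly... instead I map $q$ to the sequence $\<U(q)\cap I_\al(X):\al\in C\>\in\prod_{\al\in C}\mc P(I_\al(X))$ after first replacing $U(q)$ by a fixed minimal neighborhood; injectivity follows from $T_2$ and the fact that two such neighborhoods agreeing on all levels $\al\in C$ (a cofinal set) must agree, because a point of $U(q)\triangle U(q')$ lies in some $I_{\al}(X)$ with $\al<\de$, hence below some $\al'\in C$, and the lower levels are determined by the $C$-levels via $U[\,\cdot\,]$-saturation. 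The main obstacle, as in the published LCS argument, is verifying injectivity of these trace maps — i.e. that the $\om_1$-generated (resp. $C$-indexed) families of neighborhood traces genuinely separate points — which is where properties~(b)–(d) of Definition~\ref{df:llsp-cone} and the $P$-space Lindelöf interplay do the real work.
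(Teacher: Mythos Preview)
Your plan is far more complicated than necessary, and in part~(3) it does not give the stated bound. For~(1), the paper's argument is a one-liner: $I_0(X)$ is dense in the regular space $X$, so $|X|\le 2^{d(X)}\le 2^{s(0)}$, and both $|\beta|=\operatorname{ht}(X)$ and every $s(\alpha)=|I_\alpha(X)|$ are bounded by $|X|$. Your tree/covering sketch never lands, and the worry that ``$X=\bigcup\{U(p):p\in I_0(X)\}$ need not hold'' is irrelevant once you use density. For~(2), the paper's proof is again short and concrete: for each $x\in I_{\alpha+1}(X)$ pick $Y_x\in [U(x)\cap I_\alpha(X)]^{\omega_1}$ (this set is uncountable since $x$ is in its closure and the space is a $P$-space), and observe that for distinct $x,y\in I_{\alpha+1}(X)$ the set $U(x)\cap U(y)\cap I_\alpha(X)$ is closed discrete in the Lindel\"of space $U(x)\cap U(y)$, hence countable, so $Y_x\ne Y_y$. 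Your ``filter trace'' map is never made precise, and as described it lands in a set of filters on $I_\alpha(X)$, whose cardinality is not obviously bounded by $s(\alpha)^{\omega_1}$.

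The real error is in~(3). You map $q\in I_\delta(X)$ to the sequence $\langle U(q)\cap I_\alpha(X):\alpha\in C\rangle$, which lives in $\prod_{\alpha\in C}\mathcal P(I_\alpha(X))$; even if this map is injective it only yields $s(\delta)\le\prod_{\alpha\in C}2^{s(\alpha)}$, not $\prod_{\alpha\in C}s(\alpha)$. The paper instead chooses, for each $x\in I_\delta(X)$, a single \emph{point} $f_x(\gamma)\in U(x)\cap I_\gamma(X)$ for every $\gamma\in C$ (we may take $|C|=\operatorname{cf}(\delta)$), so that $f_x\in\prod_{\gamma\in C}I_\gamma(X)$. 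Injectivity is immediate: $\operatorname{ran} f_x\subseteq U(x)$, and since $\operatorname{cf}(\delta)>\omega$, any basic neighborhood $U(x)\setminus U[I]$ of $x$ still meets $\operatorname{ran} f_x$ (the countable set $I$ lies below some level $<\delta$), so $x\in\overline{\operatorname{ran} f_x}$; as $U(x)$ is clopen and $U(x)\cap I_\delta(X)=\{x\}$, we get $\{x\}=I_\delta(X)\cap\overline{\operatorname{ran} f_x}$, hence $f_x=f_y$ forces $x=y$. This single-point selection is the missing idea in your argument.
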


\begin{proof}
  Assume that $s$ is the cardinal sequence of the LLSP-space $X$.

  \noindent 
(1) Since  $X$ is regular, we have  
$|X|\le 2^{d(X)}=2^{I_0(X)}=2^{s(0)}$.
Since ${\beta}=ht(X)\le |X|$ and $s({\alpha})= |I_{\alpha}(X)|\le |X|$ for each ${\alpha}<ht(X)$,
we proved (1).

\medskip
\noindent (2)
Let $U$ be an LLSP-cone assignment of $X$. 
If $x\in I_{{\alpha}+1}(X)$, then $$\{x\}=\lev{{\alpha}+1}{X}
\cap \overline{(U(x)\cap I_{\alpha}(x))},$$
and so $|U(x)\cap I_{\alpha}(x)|\ge {\omega}_1$.

Pick $Y_x\in {[U(x)\cap I_{\alpha}(x))]}^{{\omega}_1} 
$ for each $x\in I_{{\alpha}+1}(X)$. 

If $\{x,y\}\in [I_{{\alpha}+1}(X)]^2$, 
then  $|U(x)\cap U(y)|\le {\omega}$ because $U(x)\cap U(y)$ is Lindelöf, 
and so $Y_x\ne Y_y$.
Thus, $|\lev{{\alpha}+1}X|\le |I_{\alpha}(X|^{{\omega}_1}$.

\medskip
\noindent (3)
We can assume that $C$ has order type $cf({\delta})$.
Let $U$ be an LLSP-cone assignment of $X$. 

For each $x\in \lev{\delta}X$ pick a function 
$f_x$ with $\dom(f_x)=C$ such that 
$f_x({\gamma})\in \lev{{\gamma}}X\cap U(x)$. 
Then $\{x\}=I_{\delta}(X)\cap \overline{\ran f_x}$, 
and we have at most $\prod\{s({\alpha}):{\alpha}\in C\}$ many $f_x$
functions. 
\end{proof}

\begin{remark}
In Theorem \ref{Theorem 1} we show that   without assuming $cf({\delta})>{\omega}$
  (3) is not true   even in the following weaker form :  
  \begin{enumerate}[(1)]
    \item[(3)']  \label{Lemma2-2} If  $\delta<\beta$ is a limit
    ordinal and $C$ is any cofinal subset of $\delta$, then
    \[
    s({\delta})\le\Big(\prod\{s(\alpha):\alpha\in C\}\Big)^{{\omega}_1}\ .
    \]
    \end{enumerate}
   \end{remark}






Indeed, by Theorem \ref{Theorem 1}, it is  consistent that 
  $2^{\omega_1} = \omega_2$ and $\langle \omega_2\rangle_{\om} \concat \langle \omega_3\rangle \in \paseq {\omega + 1}$. 

\begin{problem}\label{pr:w2}
%
%
Is it provable in ZFC that $\<2^{{\omega}_1}\>_{{\omega}}\concat \<(2^{{\omega}_1})^+\>\in \paseq{{\omega}+1}$?
\end{problem}

\section{Short sequences}\label{sc:short}

In \cite{JW}, Juhász and  Weiss characterized the elements of $\aseq{{\omega}_1}$.

\noindent{\bf Theorem. }{\em 
A sequence $\<{\kappa}_{\alpha}:{\alpha}<{\omega}_1\>$ of infinite cardinals
is the cardinal sequence of an LCS space iff 
${\kappa}_{\beta}\le ({\kappa}_{\alpha})^{\omega}$ for each 
${\alpha}<{\beta}<{\omega}_1$.}

The straightforward adaptation of this theorem for LLSP 
spaces leads to the following naive conjecture:
{\em A sequence $\<{\kappa}_{\alpha}:{\alpha}<{\omega}_2\>$ of uncountable 
cardinals
is the cardinal sequence of an LLSP space iff 
${\kappa}_{\beta}\le ({\kappa}_{\alpha})^{\omega_1}$ for each 
${\alpha}<{\beta}<{\omega}_2$.}

This statement  is not provable in ZFC as we will see soon.

To formulate our results we need some new notion.
\begin{definition}\label{df:AI}
If ${\omega}\le {\lambda}\le {\kappa}$ are cardinals, let
\begin{displaymath}
\almd{{\kappa}}{{\lambda}}=\min\{{\mu}:\forall  \mc A\in 
\left[{[{\kappa}]}^{{\lambda}}\right]^{\mu} \exists 
\{A_0,A_1\}\in {[\mc A]}^{2}\ |A_0\cap A_1|={\lambda}
\}.
\end{displaymath}
  and 
    \begin{displaymath}
    \indn{{\kappa}}{{\lambda}}=\min\{{\mu}:\forall  \mc I\in 
    \left[{[{\kappa}]}^{\le {\lambda}}\right]^{\mu}\  \exists 
    A\in \mc I\  \exists \mc J\in {[\mc I\setm \{A\}]}^{{\omega}} 
    \ A\subs \bigcup \mc J
    \}.
    \end{displaymath}
  \end{definition}

If $cf({\lambda})>{\omega}$, then 
clearly $\almd{{\kappa}}{{\lambda}}\le \indn{{\kappa}}{{\lambda}}$.

\begin{observation}\label{obs:upper}
If $\<{\kappa},{\mu}\>$ is the cardinal sequence of an LLSP space, 
then ${\mu}<\almd{{\kappa}}{{\omega}_1}$. 
\end{observation}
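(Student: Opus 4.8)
The plan is to unpack the definition of $\almd{{\kappa}}{{\omega}_1}$ and show that an LLSP space with cardinal sequence $\<{\kappa},{\mu}\>$ yields a family of $\mu$ many ${\omega}_1$-sized subsets of a ${\kappa}$-sized set that is ``almost disjoint'' in the sense relevant to $\hat{\mbb A}$. Concretely, suppose $X$ is an LLSP space with $I_0(X)$ of size ${\kappa}$ and $I_1(X)$ of size ${\mu}$, and $\lev 2X=\empt$. Fix an LLSP-cone assignment $U$ for $X$ (it exists by Corollary~\ref{cr:llsptocone}). For each $x\in I_1(X)$, the set $U(x)\cap I_0(X)$ has size at least ${\omega}_1$: indeed $x$ is the unique point of $I_1(X)$ in the closure of $U(x)\cap I_0(X)$ (as in the proof of Theorem~\ref{tm:restrictions}(2)), and if $U(x)\cap I_0(X)$ were countable then, $\tau_U$ being a P-space topology, $\{x\}=U(x)\setm U[U(x)\cap I_0(X)\setm\{x\}]$ would be open, contradicting $x\in I_1(X)$. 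So I can pick $A_x\in{[U(x)\cap I_0(X)]}^{{\omega}_1}$ for each $x\in I_1(X)$.

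The key step is then to observe that $\mc A=\{A_x:x\in I_1(X)\}$ is a family of $\mu$ many elements of ${[{\kappa}]}^{{\omega}_1}$ (identifying $I_0(X)$ with ${\kappa}$) such that no two of them meet in a set of size ${\omega}_1$. Suppose toward a contradiction that $x\ne y$ in $I_1(X)$ but $|A_x\cap A_y|={\omega}_1$. Since $A_x\subs U(x)$ and $A_y\subs U(y)$, we get $|U(x)\cap U(y)|\ge{\omega}_1$. But $x\notin U(y)$ and $y\notin U(x)$ (each $U(z)\setm\{z\}\subs I_0(X)$ by property~\eqref{cone:a}, while $x,y\in I_1(X)$), so $U(x)\cap U(y)$ is a closed subspace of the Lindelöf space $U(x)$, hence Lindelöf; and being a Lindelöf subspace of a P-space it is countable. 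This contradiction shows all pairwise intersections in $\mc A$ have size at most ${\omega}$ — in particular size strictly less than ${\omega}_1$ — so by the definition of $\hat{\mbb A}$ we must have $|\mc A|<\almd{{\kappa}}{{\omega}_1}$. Since the $A_x$ are distinct (if $A_x=A_y$ then $|A_x\cap A_y|={\omega}_1$, already excluded), $|\mc A|={\mu}$, giving ${\mu}<\almd{{\kappa}}{{\omega}_1}$ as desired.

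The one point requiring a little care is the reduction to the case $\lev 2X=\empt$ and the passage between ``the'' LLSP space realizing $\<{\kappa},{\mu}\>$ and its levels: given any LLSP space $X$ with $\mathrm{CS}(X)$ beginning $\<{\kappa},{\mu}\>$, the subspace $I_{<2}(X)=I_0(X)\cup I_1(X)$ is open in $X$, hence itself an LLSP space, with cardinal sequence exactly $\<{\kappa},{\mu}\>$ (here using that $I_1(X)$ is, as an open subset's Cantor--Bendixson structure inherits, the first CB level above $I_0$); restricting $U$ to this subspace, clause~$(\dag)$ of Theorem~\ref{tm:llspgen}(2) still holds, so $U\restriction I_{<2}(X)$ is an LLSP-cone assignment there. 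I expect no real obstacle in the argument; the main thing to get right is simply that a Lindelöf subspace of a P-space is countable (the intersection of the relevant open sets being open by the P-space property forces every such subspace to be a countable union of its clopen Lindelöf pieces collapsing to a point-countable cover), which is exactly what makes the $\almd{{\kappa}}{{\omega}_1}$ bound — rather than merely a $\prod$-type bound — the correct one.
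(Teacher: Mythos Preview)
Your approach is essentially identical to the paper's, but one step carries an incorrect justification. You write that $U(x)\cap U(y)$, ``being a Lindel\"of subspace of a P-space[,] is countable'', and in your closing paragraph you flag this as ``the main thing to get right''. But that general claim is false: the very LLSP spaces under discussion have uncountable clopen Lindel\"of subsets $U(p)$ for every $p$ above level $0$. The correct reason $U(x)\cap U(y)$ is countable --- and what the paper actually uses --- is that $U(x)\cap U(y)\subs I_0(X)$ (you already noted $U(z)\setm\{z\}\subs I_0(X)$ for $z\in I_1(X)$), hence it is \emph{discrete}; and a discrete Lindel\"of space is trivially countable. Once you replace the P-space claim by this discreteness observation, your argument is complete and matches the paper's proof.
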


\begin{proof}
    Let $U$ be an LLSP neighborhood assignment on $X$.     
    For each $y\in I_1(X)$ pick $A(y)\in {[I_0(x)\cap U(y)]}^{{\omega}_1}$.
    Then $\{A(y):y\in I_1(X) \}\subs {[I_0(X)]}^{{\omega}_1}$ and 
    $A(z)\cap A(y)\subs U(z)\cap U(y)$ is countable for $\{y,z\} \in [I_1(X)]^2$
    because $U(z)\cap U(y)$ is a  discrete, Lindelöf space.  So 
    $|I_1(X)|< \almd{{\kappa}}{{\omega}_1}$. 
\end{proof}

The following lemma is well-known and easy to prove from the Erd\H os-Rado partition theorem 
$(2^{{\omega}})^+\to ({\omega}_1)^2_{\omega}$    
(see Baumgartner \cite{Ba-76}):

\begin{lemma}\label{lm:Ainccc}
If $V\models $ ``$2^{{\omega}_1}={\omega}_2$'' and $P$ satisfies c.c.c. then 
$V^P \models $ $\almd{{\omega}_1}{{\omega}_1}={\omega}_3$.
\end{lemma}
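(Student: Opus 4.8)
The plan is to prove both inequalities $\almd{{\omega}_1}{{\omega}_1} \le {\omega}_3$ and $\almd{{\omega}_1}{{\omega}_1} \ge {\omega}_3$ in the extension $V^P$, where $P$ is c.c.c.\ and $V \models 2^{{\omega}_1} = {\omega}_2$. Note first that a c.c.c.\ forcing preserves all cardinals and cofinalities, and moreover, since $P$ has a dense subset of size at most $2^{{\omega}}$ (or can be taken to have size continuum), it does not blow up $2^{{\omega}_1}$ beyond ${\omega}_2$ either: indeed $(2^{{\omega}_1})^{V^P} \le ((2^{{\omega}_1})^{|P|})^V = (2^{{\omega}_1})^V = {\omega}_2$ when $|P| \le 2^{{\omega}_1}$, which we may assume. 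So $V^P \models 2^{{\omega}_1} = {\omega}_2$. Thus it suffices to work in any model of $2^{{\omega}_1}={\omega}_2$ in which additionally $(2^{{\omega}})^+ \to ({\omega}_1)^2_{{\omega}}$ persists — but Erd\H os--Rado is a ZFC theorem, so this is automatic. The role of c.c.c.\ is really just to guarantee that $2^{{\omega}_1}$ stays equal to ${\omega}_2$ in the extension.

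For the upper bound $\almd{{\omega}_1}{{\omega}_1} \le {\omega}_3$: let $\mc A \in \big[[{\omega}_1]^{{\omega}_1}\big]^{{\omega}_3}$; we must find $A_0 \ne A_1$ in $\mc A$ with $|A_0 \cap A_1| = {\omega}_1$. Since $2^{{\omega}_1} = {\omega}_2 < {\omega}_3$, the family $\mc A$ cannot consist of pairwise distinct sets — in fact at most ${\omega}_2$ many distinct subsets of ${\omega}_1$ exist, so some single set occurs ${\omega}_3$ times, or more simply, by pigeonhole there exist $A_0 \ne A_1 \in \mc A$ with $A_0 = A_1$ as sets is impossible; rather, we only need: enumerate $\mc A$ injectively as an indexed family $\langle A_\xi : \xi < {\omega}_3\rangle$, and since there are only ${\omega}_2$ distinct subsets of ${\omega}_1$, there are $\xi \ne \eta$ with $A_\xi = A_\eta$, giving $|A_\xi \cap A_\eta| = {\omega}_1$ trivially. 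Wait — but $\mc A$ as a \emph{set} of subsets then has size at most ${\omega}_2$; the definition of $\almd{\cdot}{\cdot}$ quantifies over $\mc A \in \big[[{\omega}_1]^{{\omega}_1}\big]^{{\mu}}$, i.e.\ over sets of size ${\mu}$, so for ${\mu} = {\omega}_3$ there is simply no such set and the condition holds vacuously; hence $\almd{{\omega}_1}{{\omega}_1} \le {\omega}_3$. (One should double-check the intended reading of $[\,\cdot\,]^{\mu}$ here: if it means indexed families the argument via repetition works; if it means sets, it is vacuous. Either way the bound holds.)

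For the lower bound $\almd{{\omega}_1}{{\omega}_1} \ge {\omega}_3$, i.e.\ $\almd{{\omega}_1}{{\omega}_1} > {\omega}_2$: we must produce a family $\mc A$ of ${\omega}_2$ subsets of ${\omega}_1$, each of size ${\omega}_1$, with pairwise intersections countable — an "almost disjoint" family of uncountable subsets of ${\omega}_1$ of size $2^{{\omega}_1}$. This is where Erd\H os--Rado enters. The standard construction: build a tree $2^{<{\omega}_1}$ of height ${\omega}_1$; to each branch $b \in 2^{{\omega}_1}$ assign the set of nodes along it, a subset of the ${\omega}_1$-many levels... but that gives intersections of size $<{\omega}_1$ automatically only if we index nodes so distinct branches share a countable initial segment — which fails, branches through $2^{{\omega}_1}$ can agree on stationarily much. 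The correct approach: use an independent family, or rather use that from $(2^{{\omega}})^+ \to ({\omega}_1)^2_{{\omega}}$ one builds, as in Baumgartner \cite{Ba-76}, a family of $2^{{\omega}_1}$ functions $f : {\omega}_1 \to {\omega}$ that are pairwise eventually different, or pairwise "everywhere different on a club"; then the graphs $\{\langle \alpha, f(\alpha)\rangle : \alpha < {\omega}_1\} \subseteq {\omega}_1 \times {\omega} \cong {\omega}_1$ have pairwise countable intersection. Constructing such a family of size $2^{{\omega}_1} = {\omega}_2$ is exactly the known consequence of Erd\H os--Rado that Baumgartner records; I would cite it and spell out the graph-to-subset translation. \textbf{The main obstacle} is getting this lower-bound construction precisely right — ensuring the almost-disjoint family has full size $2^{{\omega}_1}$ and not merely ${\omega}_1^+$ in the naive way — and making the appeal to $(2^{{\omega}})^+ \to ({\omega}_1)^2_{{\omega}}$ explicit rather than folkloric; the c.c.c.\ preservation and the (near-)vacuous upper bound are routine.
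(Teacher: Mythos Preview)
Your argument contains a genuine error that breaks the upper bound. You claim that c.c.c.\ forcing preserves $2^{\omega_1}=\omega_2$, but this is false: adding $\omega_3$ Cohen reals is c.c.c.\ and forces $2^{\omega}=2^{\omega_1}=\omega_3$. In fact, the very first application of this lemma in the paper is precisely that situation, so the upper bound $\almd{\omega_1}{\omega_1}\le\omega_3$ cannot be the vacuous cardinality statement you propose. (Your parenthetical hedge ``$P$ has a dense subset of size at most $2^\omega$\ldots which we may assume'' is an unjustified extra hypothesis; the lemma is stated for arbitrary c.c.c.\ $P$.)

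You also have the roles of the two inequalities reversed. The Erd\H{o}s--Rado relation $(2^{\omega})^+\to(\omega_1)^2_\omega$ is what yields the \emph{upper} bound, via Baumgartner's theorem that there is no almost disjoint family of size $(2^{\omega})^+$ in $[\omega_1]^{\omega_1}$; since in $V$ one has $2^{\omega}\le 2^{\omega_1}=\omega_2$, the relevant instance $\omega_3\to(\omega_1)^2_\omega$ holds in $V$, and the real work is to show, using c.c.c., that no such family is added in $V^P$ (pull the names back to $V$, use c.c.c.\ to cover each countable intersection $\dot A_\alpha\cap\dot A_\beta$ by a ground-model countable set, and run the Erd\H{o}s--Rado argument there). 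The \emph{lower} bound $\almd{\omega_1}{\omega_1}\ge\omega_3$, by contrast, is a ZFC construction --- e.g., recursively build $\omega_2$ functions $f_\alpha:\omega_1\to\omega_1$ with $f_\alpha(\eta)\notin\{f_\beta(\eta):\beta<\alpha,\ \beta\text{ among the first }\eta\}$, so any two agree only on a countable set, and take their graphs in $\omega_1\times\omega_1\cong\omega_1$ --- and needs no partition relation at all. Your sketch invokes Erd\H{o}s--Rado to \emph{construct} an almost disjoint family, which is not how that theorem operates.
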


So if  GCH holds in the ground model, and we add ${\omega}_3$ Cohen reals to it, 
then in the generic extension $2^{\omega}=2^{{\omega}_1}={\omega}_3$ +
$\almd{{\omega}_1}{{\omega}_1}={\omega}_3$, so 
$\<{\omega}_1, 2^{{\omega}_1}\>$ is not the cardinal sequence of 
an LLSP space. Hence, the naive conjecture fails. 
Instead of the naive conjecture, we can prove  the following  weaker statements: 

\begin{theorem}\label{tm:positive}
Assume that $\vec {\kappa}=\<{\kappa}_{\alpha}:{\alpha}<{\omega}_2\>$ is a sequence of uncountable 
cardinals.

\noindent (1) If ${\kappa}_{\beta}<\almd{{\kappa}_{\alpha}}{{\omega}_1}$ for each 
${\alpha}<{\beta}<{\omega}_2$, then $\vec{\kappa}$
 is the cardinal sequence of an LLSP space.


 \noindent (2) If $\vec {\kappa}$ is the cardinal sequence of an LLSP space, then  
 ${\kappa}_{\beta}< \indn {{\kappa}_{\alpha}}{{\kappa}_{\alpha}}$ for each 
 ${\alpha}<{\beta}<{\omega}_2$.
\end{theorem}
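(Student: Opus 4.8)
\textbf{Proof plan for Theorem \ref{tm:positive}.}

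For part (1), the plan is to build an LLSP-cone assignment on a suitable underlying set, invoking Theorem \ref{tm:llspgen}(1). By Theorem \ref{mfact} it suffices to treat the case where $\vec\kappa$ is nonincreasing with $\min\ran(\vec\kappa)=\kappa_0$; actually one reduces to a single "block" with first value $\lambda$ and handles the general case by concatenation via Lemma \ref{pr:conc}. The natural carrier is $X=\bigcup_{\alpha<\omega_2}X_\alpha$ with $|X_\alpha|=\kappa_\alpha$, and one wants to define $U(p)$ for $p\in X_\alpha$ so that $U(p)\cap X_\beta$ is, for each $\beta<\alpha$, a set of size $\omega_1$ chosen from $[\kappa_\beta]^{\omega_1}$, arranged so that no two points at the same level share an uncountable trace (so that $U(x)\cap U(y)$ is countable, giving the Lindelöf/Hausdorff requirements and condition \eqref{cone:c}), while for $\beta$ strictly below $\alpha$ the "cone" $U(p)$ still meets $X_\beta$ in a way that survives removal of a countable $U[I]$ — this is \eqref{cone:b}. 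The hypothesis ${\kappa}_\beta<\almd{{\kappa}_\alpha}{{\omega}_1}$ is exactly what lets us find, on level $\alpha$, a family of $\kappa_\alpha$ many subsets of $[\kappa_\beta]^{\omega_1}$ with pairwise countable intersections; I would do this simultaneously for all $\beta<\alpha$ by a bookkeeping argument, defining $U$ by recursion on $\alpha$ and checking \eqref{cone:a}--\eqref{cone:b} and the LLSP-function \eqref{cone:c}--\eqref{cone:d} along the way (for \eqref{cone:d}, note $U(x)\subseteq U(y)$ whenever $x\in U(y)$ if we make cones $\subseteq$-coherent, trivializing it). The main obstacle is ensuring \eqref{cone:b}: after deleting a countable $U[I]$, $U(p)$ must still hit every lower level $X_\gamma$; this forces $|U(p)\cap X_\gamma|\ge\omega_1$ (already imposed) but also requires that the countably many cones $U(i)$ ($i\in I$) cannot between them swallow $U(p)\cap X_\gamma$ — which again follows from pairwise-countable-intersection of the traces if $U$ is set up coherently, but getting the recursion to maintain all these constraints across $\omega_2$ levels is the delicate point.

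For part (2), assume $\vec\kappa=\mathrm{CS}(X)$ for an LLSP space $X$ with LLSP neighborhood assignment $U$ (Proposition \ref{pr:llspassignment}), and fix $\alpha<\beta<\omega_2$; write $\lambda={\kappa}_\alpha$. The idea is to read off, from the points of $I_\beta(X)$, a witnessing family showing ${\kappa}_\beta<\indn{\lambda}{\lambda}$. For each $y\in I_\beta(X)$ set $A(y)=U(y)\cap I_\alpha(X)$; since $U(y)$ is Lindelöf and $I_\alpha(X)$ is a closed discrete subspace of the open set $I_{\le\alpha}(X)$, we get $A(y)\in [I_\alpha(X)]^{\le\lambda}$ (in fact $\le$ the Lindelöf degree, hence of size $\le\lambda=|I_\alpha(X)|$). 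I would then argue that the family $\mc I=\{A(y):y\in I_\beta(X)\}$ cannot have the property that some member is covered by countably many others: if $A(y_0)\subseteq\bigcup_{k<\omega}A(y_k)$, then $U(y_0)\cap I_\alpha(X)\subseteq\bigcup_k U(y_k)$, and since $I_\alpha(X)$ is dense in $I_{\le\alpha}(X)$ while $U(y_0)\setminus I_{<\alpha}(X)=\{y_0\}$ is a clopen (in $U(y_0)$) trace, one derives $U(y_0)\subseteq\{y_0\}\cup\bigcup_k U(y_k)$, which contradicts Hausdorffness (separate $y_0$ from the $y_k$'s using the level structure, or use that $y_0\notin I_{<\alpha}$ but each $U(y_k)$ has its "top" at level $\beta\ge\alpha$ — more carefully, pass to $\bigcap_k(U(y_0)\setminus U(y_k))$, an open set by the P-space property containing $y_0$, which must meet the dense set $I_\alpha(X)$, contradicting the covering). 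Hence $|\mc I|<\indn{\lambda}{\lambda}$; and since distinct $y$'s give distinct $A(y)$ (because $U(y)\cap U(y')\cap I_\alpha(X)$ is countable while $|A(y)|=\lambda\ge\omega_1$ — here we use $\kappa_\alpha\ge\omega_1$, uncountable), we conclude ${\kappa}_\beta=|I_\beta(X)|=|\mc I|<\indn{\lambda}{\lambda}=\indn{{\kappa}_\alpha}{{\kappa}_\alpha}$.

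The one subtle point in part (2) is the step "$A(y_0)\subseteq\bigcup_k A(y_k)$ implies a contradiction": it must be handled using the P-space property to make $\bigcap_{k<\omega}(U(y_0)\setminus U(y_k))$ open together with density of $I_\alpha(X)$ in $I_{\le\alpha}(X)=X\setminus I_{<\alpha}(X)$ restricted appropriately — essentially the same mechanism as in the verification of \eqref{cone:b} inside the proof of Theorem \ref{tm:llspgen}. I expect part (1) to be the genuinely hard direction: the combinatorial construction of the almost-disjoint-in-each-coordinate cone system across $\omega_2$ levels, with the reduction via Theorem \ref{mfact} and Lemma \ref{pr:conc} to reduce to the "first value is the minimum" case, and a careful recursion/bookkeeping to simultaneously satisfy \eqref{cone:a}--\eqref{cone:b} and supply the LLSP-function. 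Part (2) should be a relatively short argument of the same flavor as Observation \ref{obs:upper}, just with $\indn{}{}$ in place of $\almd{}{}$ and an extra P-space density argument.
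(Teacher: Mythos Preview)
Your plan for part~(2) is the paper's approach: set $\mathcal A=\{U(y)\cap I_\alpha(X):y\in I_\beta(X)\}$ and argue it witnesses $\kappa_\beta<\indn{\kappa_\alpha}{\kappa_\alpha}$. One slip: the claim that $U(y)\cap U(y')\cap I_\alpha(X)$ is countable is not justified in general---two Lindel\"of cones can share a large trace on a fixed level. But you do not need it: the P-space density argument you already sketch shows $A(y_0)\not\subseteq\bigcup_{k}A(y_k)$ for any countable set of \emph{other} $y_k$'s, and the singleton case $\{y_1\}$ gives $A(y_0)\not\subseteq A(y_1)$, hence the $A(y)$ are pairwise distinct.

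For part~(1) the paper takes a genuinely different route. After reducing to $\kappa_0=\min_\alpha\kappa_\alpha$ (via a short ad hoc lemma using disjoint unions, not Lemma~\ref{pr:conc}), it shows $\sup_\xi\kappa_\xi<\almd{\kappa_0}{\omega_1}$ and fixes a \emph{single} almost disjoint family $\{A^\xi_\alpha:\xi<\omega_2,\ \alpha<\kappa_\xi\}\subset[\kappa_0]^{\omega_1}$. It then imports $\kappa_0$ many disjoint LLSP spaces $L_\sigma$ each with cardinal sequence $\langle\omega_1\rangle_{\omega_2}$ (these exist by \cite{MaSoLinSP22}), and the desired space is $\bigcup_\sigma L_\sigma$ together with extra points $\langle\xi,\alpha\rangle$ whose cone is a union of selected cones $B^\sigma_\nu\subset L_\sigma$ indexed by $\sigma\in A^\xi_\alpha$. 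The key payoff: $U(\langle\xi,\alpha\rangle)\cap U(\langle\xi',\alpha'\rangle)$ lives in $\bigcup\{L_\sigma:\sigma\in A^\xi_\alpha\cap A^{\xi'}_{\alpha'}\}$, a union over a \emph{countable} index set, so \eqref{cone:c} reduces to the LLSP structure already present in each $L_\sigma$; conditions \eqref{cone:b} and \eqref{cone:d} are likewise inherited.

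Your level-by-level recursion is not the paper's argument, and the difficulty you flag as ``the delicate point'' is real and unaddressed. Once $\alpha\ge\omega_1$, a point $p\in X_\alpha$ must carry an $\omega_1$-trace on uncountably many lower levels; even with pairwise-countable traces level by level, $U(p)\cap U(q)$ can have size $\ge\omega_1$, and you must produce a countable $D(p,q)\subset U(p)\cap U(q)$ whose cones cover it---this is precisely \eqref{cone:c}, and $\subseteq$-coherence of cones only trivializes \eqref{cone:d}, not \eqref{cone:c}. The paper's prefabricated $L_\sigma$'s collapse this problem to a single almost-disjointness check on $\kappa_0$, which is what the hypothesis supplies directly.
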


\begin{proof}[Proof of Theorem \ref{tm:positive}(1)]
Let us say that a sequence $\<{\kappa}_{\alpha}:{\alpha}<{\omega}_2\>$
is  {\em good} iff ${\kappa}_{\beta}<\almd{{\kappa}_{\alpha}}{{\omega}_1}$ for each 
${\alpha}<{\beta}<{\omega}_2$.

  \begin{lemma}
    If every good sequence $\vec {\kappa}$ with 
    ${\kappa}(0)=\min\{{\kappa}({\alpha}):{\alpha}<{\omega}_2\}$
    is the cardinal sequence of an LLSP space, then (1) holds. 
\end{lemma}    
\begin{proof}
Let $\vec {\kappa}$ be a good sequence.

Define the sequence 
$0={\alpha}_0<{\alpha}_1<\dots<{\alpha}_{n+1}={\omega}_2$
as follows.

  Let 
${\alpha}_0=0$
and
\begin{displaymath}
{\alpha}_{i+1}=\min
(\{{\alpha}\in {\omega}_2\setm {\alpha}_i:{\kappa}_{{\alpha}_{i+1}}<{\kappa}_{{\alpha}_i}\}\cup\{{\omega}_2\})
\end{displaymath}
We stop the construction  if ${\alpha}_{n+1}={\omega}_2$.
Since ${\kappa}_{{\alpha}_{i+1}}<{\kappa}_{{\alpha}_i}$ if ${\alpha}_{i+1}<{\omega}_2$,
we will stop in some step   $n\in {\omega}$.  

For $0\le i\le n$
write ${\mu}_i={\kappa}_{{\alpha}_i}$ and  let 
\begin{displaymath}
s_i=\<{\mu}_i\>_{{\alpha}_i}\concat \vec {\kappa}\restriction [{\alpha}_i, {\alpha}_{i+1}).
\end{displaymath}
Observe that $s_i(0)={\mu}_i={\kappa}_{{\alpha}_i}=\min\{s_i({\gamma}):{\gamma}\in \dom(s_i)\}$.
So, by the assumption on the lemma, there is 
an LLSP space   $X_i$ with cardinal sequence $s_i$.
Then $X$, the disjoint union of $\{X_i:i\le n\}$ has cardinal sequence $\vec {\kappa}$. 
\end{proof}

From now on  
we  assume that ${\kappa}_0=\min_{{\alpha}<{\omega}_2}{\kappa}_{\alpha}$.

Write ${\kappa}={\kappa}_0$.

\begin{lemma}\label{lm:cof}
$\sup\{{\kappa}_{\xi}:{\xi}<{\omega}_2\}<\almd{{\kappa}}{{\omega}_1}$.
\end{lemma}

\begin{proof} 
If ${\kappa}>{\omega}_1$, then  
$\operatorname{cf}(\almd{{\kappa}}{{\omega}_1})>{\kappa}\ge {\omega}_2$, so the lemma holds. 

If ${\kappa}={\omega}_1$, then observe that 
$\almd{{\omega}_1}{{\omega}_1}$ is a regular cardinal and clearly
$\almd{{\omega}_1}{{\omega}_1}\ge {\omega}_3$. 
\end{proof}

By Lemma \ref{lm:cof}, we can fix an almost disjoint family $\{A^{\xi}_{\alpha}:{\xi}<{\omega}_2, {\alpha}<{\kappa}_{\xi}\}\subs 
{[{\kappa}]}^{{\omega}_1}$.

It was proved in \cite[Section 2]{MaSoLinSP22} that for every ordinal $\alpha < \omega_3$
 there is an LLSP space with cardinal sequence 
 $\langle \omega_1 \rangle_{\alpha}$.  Then, for each $\sigma\in \kappa$ 
 let $L_{\sigma}$ be an LLSP space with cardinal sequence 
 $\langle \omega_1 \rangle_{\omega_2}$ such that 
 $\{L_{\sigma} : \sigma \in \kappa \}$ is pairwise disjoint.  
 For each $\sigma < \kappa$ let $V_{\sigma}$ be an LLSP-cone assignment
  for $L_{\sigma}$. For each $\sigma < \kappa$ and  $\nu < \omega_2$ pick a point $x^{\sigma}_{\nu}\in I_{\nu}(L_{\sigma})$, 
and let $B^{\sigma}_{\nu}= V_{\sigma}(x^{\sigma}_{\nu})$.

For each ${\xi}<{\omega}_2$ let $I_{\xi}\subs {\xi}$ be a cofinal
set with  $tp(I_{\xi})=cf({\xi}).$  In particular, $I_{{\zeta}+1}=\{{\zeta}\}$
for each ${\zeta}<{\omega}_2$.

Partition every $A^{\xi}_{\alpha}$ into $|I_{\xi}|$-many uncountable sets:
\begin{displaymath}
A^{\xi}_{\alpha}={\bigcup}^*\{A^{\xi}_{{\alpha},{\nu}}:{\nu}\in I_{\xi}\}
\end{displaymath}
and $|A^{\xi}_{\alpha,{\nu}}|={\omega}_1$.

Define the function $i_{{\xi},{\alpha}}:A^{\xi}_{\alpha}\to I_{\xi}$
such that ${\sigma}\in A^{\xi}_{{\alpha}, i_{{\alpha},{\xi}}({\sigma})}$
for each  ${\sigma}\in A^{{\xi}}_{\alpha}$. 

The underlying set of our space will be 
\begin{displaymath}
X=\bigcup_{{\sigma}<{\kappa}}L_{\sigma}\cup
\{\<{\xi},{\alpha}\>:{\xi}<{\omega}_2,{\alpha}<{\kappa}_{\xi}\}.
\end{displaymath}
For ${\xi}<{\omega}_2$ write 
\begin{displaymath}
X_{\xi}=\bigcup_{{\sigma}\in {\kappa}}\lev{\xi}{L_{\sigma}}
\cup (\{{\xi}\}\times {\kappa}_{\xi})
\end{displaymath}
We will define our topology in such a way that every $L_{\sigma}$ is clopen,
and $X_{\xi}=\lev{\xi}X$.

To do so we need to define an LLSP-cone assignment  
$U:X\to \mc P(X)$  with the witnessing partition 
$\<X_{\zeta}:{\zeta}<{\omega}_2\>$.

We define $U$ as follows:
\begin{enumerate}[(1)]
\item if $y\in L_{\sigma}$, then 
$$
U(y)=V_{\sigma}(y).
$$
\item 
for  $y=\<{\xi},{\alpha}\>$ we define $U(y)$ as follows:
\begin{displaymath}
U(\<{\xi},{\alpha}\>)=\{\<{\xi},{\alpha}\>\}\cup\bigcup \{
  B^{{\sigma}}_{\nu}: {\sigma}\in 
A^{\xi}_{{\alpha},{\nu}}, {\nu}\in I_{\xi}\}.
\end{displaymath}
\end{enumerate}

\begin{lemma}
$U$ is an LLSP-cone assignment  with the witnessing partition 
$\<X_{\zeta}:{\zeta}<{\omega}_2\>$.
\end{lemma}
\begin{proof}
The  only non-trivial part is to check \ref{df:llsp-cone}(\ref{cone:c})
for $\<{\xi},{\alpha}\>\ne \<{\xi}',{\alpha}'\>\in X$.
But in this case we have  
\begin{multline*}
  U(\<{\xi},{\alpha}\>)\cap U(\<{\xi}',{\alpha}'\>)=\\
  \bigcup\{ B^{{\sigma}}_{\nu}: {\sigma}\in 
A^{\xi}_{{\alpha},{\nu}}, {\nu}\in I_{\xi}\}\cap 
\bigcup\{ B^{{\sigma}}_{\nu}: {\sigma}\in 
A^{\xi'}_{{\alpha'},{\nu}}, {\nu}\in I_{\xi'}\}=\\
\bigcup\{B^{\sigma}_{i_{{\xi},{\alpha}}({\sigma})}\cap 
B^{\sigma}_{i_{{\xi}',{\alpha}'}({\sigma})}:
{\sigma}\in A^{\xi}_{{\alpha}}\cap A^{\xi'}_{{\alpha'}}\},
\end{multline*}
and $A^{\xi}_{{\alpha}}\cap A^{\xi'}_{{\alpha'}}$ is countable. 
\end{proof}

So $U$ is an LLSP-cone assignment. 
\end{proof}

\begin{proof}[Proof of Theorem \ref{tm:positive}(2)]
  Assume that $X$ is an LLSP space   with cardinal sequence $\<{\kappa}_{\alpha}:{\alpha}<{\omega}_2\>$  and 
  let $U$ be  an LLSP-cone assignment on $X$ with witnessing partition 
  $\<I_{\alpha}(X):{\alpha}<{\omega}_2\>$.
Assume that ${\alpha}<{\beta}<{\omega}_2$. Let 
\begin{displaymath}
\mc A=\{U(x)\cap I_{\alpha}(X):x\in I_{\beta}(X)\}.
\end{displaymath}
Then $|\mc A|={\kappa}_{\beta}$ and $\mc A$ witnesses that ${\kappa}_{\beta}< \indn {{\kappa}_{\alpha}}{{\kappa}_{\alpha}}$. 
\end{proof}

After proving Theorem \ref{tm:positive} the following questions  raise naturally: 

\begin{enumerate}[(Q1)]
\item Is it true that 
$\almd{{\kappa}}{{\omega}_1}=\indn{{\kappa}}{{\omega}_1}$ for each 
uncountable cardinal ${\kappa}$?
\item  Assume that $\vec {\kappa}=\<{\kappa}_{\alpha}:{\alpha}<{\mu}\>$ is a 
 sequence of uncountable cardinals. 
Is it true that 
$\vec {\kappa}$ is the 
cardinal sequence of an LLSP space iff 
${\kappa}_{\beta}<\indn{{\kappa}_{\alpha}}{{\omega}_1}$ for each
${\alpha}<{\beta}<{\mu}$? 
\end{enumerate}





In this section we address the first question.  
In the next section we give a consistent negative answer to the second question. 

First we show that 
the Cohen model does not separate $\almd{{\omega}_1}{{\omega}_1}$ and $\indn{{\omega}_1}{{\omega}_1}$.

\begin{theorem}\label{tm:Cohen}
If $V\models GCH$ and ${\kappa}$ is a cardinal, then 
\begin{displaymath}
V^{Fn({\kappa},2)}\models \indn{{\omega}_1}{{\omega}_1}={\omega}_3.
\end{displaymath}
\end{theorem}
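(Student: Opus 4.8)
The plan is to establish two inequalities: $\indn{{\omega}_1}{{\omega}_1}\le{\omega}_3$ in $V^{Fn({\kappa},2)}$, and $\indn{{\omega}_1}{{\omega}_1}\ge{\omega}_3$ (the latter holding in any model where $2^{{\omega}}\ge{\omega}_1$, in particular in $V$ and its extensions). For the lower bound, given any family $\mc I\in\left[{[{\omega}_1]}^{\le{\omega}_1}\right]^{{\omega}_2}$ I would show it need not satisfy the covering requirement; concretely, one builds (or invokes from $V$ under GCH, where $2^{{\omega}_1}={\omega}_2$) an increasing or almost disjoint family of size ${\omega}_2$ on ${\omega}_1$ in which no member is covered by countably many others --- for instance a $\subseteq$-increasing chain $\langle C_\xi:\xi<{\omega}_2\rangle$ of subsets of ${\omega}_1$ each of size ${\omega}_1$ with $C_\xi\setminus C_\eta$ uncountable for $\eta<\xi$; then no $C_\xi$ is contained in a countable union of others. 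Hence ${\omega}_2$ is too small, i.e. $\indn{{\omega}_1}{{\omega}_1}\ge{\omega}_3$; this does not even use c.c.c. Note $\indn{{\omega}_1}{{\omega}_1}\le({2^{{\omega}_1}})^+={\omega}_3$ in the extension (since $Fn({\kappa},2)$ is c.c.c.\ and GCH holds in $V$, a standard nice-name count gives $2^{{\omega}_1}\le{\omega}_2$ in $V^{Fn({\kappa},2)}$, and a crude cardinality bound always gives $\indn{{\omega}_1}{{\omega}_1}\le(2^{{\omega}_1})^+$), so the real content is the reverse inequality $\indn{{\omega}_1}{{\omega}_1}\ge{\omega}_3$ \emph{in the Cohen extension}.

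The heart of the argument is therefore: in $V^{Fn({\kappa},2)}$, every family $\mc I$ of size ${\omega}_2$ of countable-or-${\omega}_1$-sized subsets of ${\omega}_1$ contains a member covered by countably many others. I would run a $\Delta$-system / reflection argument. By c.c.c., $\mc I=\{A_\mu:\mu<{\omega}_2\}$ can be captured: each $A_\mu$ has a nice name using countably many (if $A_\mu$ countable) or ${\omega}_1$-many conditions, and the relevant part of the forcing lives on a set of coordinates $S_\mu\in[{\kappa}]^{\le{\omega}_1}$. Applying the $\Delta$-system lemma for ${\omega}_1$-sized sets (valid since $2^{{\omega}}\ge{\omega}_1$ is not needed --- one uses that ${\omega}_2$ is regular and ${\omega}_1^{{\omega}}<{\omega}_2$ under GCH, or equivalently $({\omega}_1)^{<{\omega}_1}={\omega}_1$ in $V$), one finds ${\omega}_2$-many indices $\mu$ whose supports $S_\mu$ form a $\Delta$-system with root $R$ and whose names restricted to $R$ are literally the same name. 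On this stationary-sized subfamily the sets $A_\mu\cap(\text{part determined by }R)$ are all equal to some fixed $B$, and the "tails" are mutually generic; one then argues that already countably many of the $A_\mu$ on this subfamily, together with $B$, cover any particular $A_{\mu_0}$ --- essentially because the Cohen-generic tails are so homogeneous that a fixed countable subfamily catches everything below ${\omega}_1$. This is the step I expect to be the main obstacle: making the genericity/homogeneity precise enough that a \emph{countable} subfamily suffices for the covering (the definition of $\indn{}{}$ demands $\mc J\in[\mc I\setminus\{A\}]^{{\omega}}$, not merely $\mc J\in[\mc I]^{<{\kappa}}$), so one cannot be wasteful.

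An alternative, cleaner route --- and the one I would try first --- is to deduce Theorem~\ref{tm:Cohen} directly from Lemma~\ref{lm:Ainccc} plus the remark just after Definition~\ref{df:AI} that $\almd{{\kappa}}{{\lambda}}\le\indn{{\kappa}}{{\lambda}}$ when $cf({\lambda})>{\omega}$. Since $cf({\omega}_1)={\omega}_1>{\omega}$, Lemma~\ref{lm:Ainccc} gives $\almd{{\omega}_1}{{\omega}_1}={\omega}_3$ in the Cohen extension, hence $\indn{{\omega}_1}{{\omega}_1}\ge{\omega}_3$ for free. Combined with the upper bound $\indn{{\omega}_1}{{\omega}_1}\le(2^{{\omega}_1})^+={\omega}_3$ from the c.c.c.\ nice-name count in $V\models GCH$, this pins $\indn{{\omega}_1}{{\omega}_1}={\omega}_3$ and the proof is essentially immediate. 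Thus I would structure the proof as: (i) recall $2^{{\omega}_1}={\omega}_2$ in $V^{Fn({\kappa},2)}$ and the trivial bound $\indn{{\omega}_1}{{\omega}_1}\le(2^{{\omega}_1})^+$; (ii) apply Lemma~\ref{lm:Ainccc} and $\almd{{\omega}_1}{{\omega}_1}\le\indn{{\omega}_1}{{\omega}_1}$ to get the lower bound ${\omega}_3$; (iii) conclude. If a subtlety forces a more hands-on approach (e.g.\ the trivial upper bound needs care because $A$ ranging over ${[{\omega}_1]}^{\le{\omega}_1}$ gives $2^{{\omega}_1}$-many sets, so $\mc I$ of size $(2^{{\omega}_1})^+$ already forces a repeat which trivially self-covers only if repeats are allowed --- here one must check whether $\mc I$ is a set or allows multiplicities), I would fall back on the $\Delta$-system argument sketched above, whose crux is the countability of the covering subfamily.
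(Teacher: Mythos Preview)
Your ``cleaner route'' does not work, and since you say it is the approach you would try first, this is the main gap. The claim that a nice-name count gives $2^{{\omega}_1}\le{\omega}_2$ in $V^{Fn({\kappa},2)}$ is simply false when ${\kappa}\ge{\omega}_3$: adding ${\kappa}$ Cohen reals forces $2^{\omega}\ge{\kappa}$, hence $2^{{\omega}_1}\ge{\kappa}$ as well. The theorem is stated for an \emph{arbitrary} cardinal ${\kappa}$, so the trivial bound $\indn{{\omega}_1}{{\omega}_1}\le(2^{{\omega}_1})^+$ yields nothing. Consequently the inequality $\indn{{\omega}_1}{{\omega}_1}\le{\omega}_3$ is not cheap; it is the entire content of the theorem. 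You have the two directions reversed in your write-up: the ZFC fact (which the paper dispatches in one line) is $\indn{{\omega}_1}{{\omega}_1}\ge{\omega}_3$, and what must be proved in the Cohen model is that every family of ${\omega}_3$ (not ${\omega}_2$, as you write) many subsets of ${\omega}_1$ contains a member covered by countably many others.

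Your $\Delta$-system sketch for the upper bound is heading in the right direction and is essentially the paper's approach, but it stops short of the actual mechanism. The paper does not work with nice names for the $A_{\alpha}$ directly; instead, for each ${\alpha}<{\omega}_3$ and ${\xi}<{\omega}_1$ it fixes a maximal antichain $C({\alpha},{\xi})$ deciding ``${\xi}\in A_{\alpha}$'', collects the supports $S_{\alpha}$ of all these antichains, and then thins to $I\in[{\omega}_3]^{{\omega}_3}$ so that the $S_{\alpha}$ form a $\Delta$-system \emph{and} the natural order-bijection ${\rho}_{{\alpha},{\beta}}$ between $S_{\alpha}$ and $S_{\beta}$ carries each $C({\alpha},{\xi})$ to $C({\beta},{\xi})$ with the same decisions. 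One then fixes a single ${\delta}\in I$ and countably many ${\alpha}_n\in I\setminus\{{\delta}\}$ and proves the density statement: whenever $r\le p$ forces ${\xi}\in A_{\delta}$, choose $q\in C({\delta},{\xi})$ compatible with $r$, pick $n$ with $(S_{{\alpha}_n}\setminus S)\cap\dom(r)=\emptyset$, transport $q$ to $s={\rho}_{{\delta},{\alpha}_n}(q)$, and observe $q\cup r\cup s$ is a condition forcing ${\xi}\in A_{{\alpha}_n}$. Hence $p\Vdash A_{\delta}\subseteq\bigcup_n A_{{\alpha}_n}$. The point you correctly flagged as ``the main obstacle''---that a \emph{countable} subfamily must suffice---is exactly what this isomorphism-plus-disjoint-support trick delivers, and your sketch does not reach it.
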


\begin{proof}
    $\indn{{\omega}_1}{{\omega}_1}\ge {\omega}_3$ holds in ZFC. 

  Assume that 
  $p \Vdash \{A_{\alpha}:{\alpha}<{\omega}_3\}\subs {[{\omega}_1]}^{{\omega}_1}$
  
  For each ${\alpha}<{\omega}_3$ and ${\xi}<{\omega}_1$ let 
  $C({\alpha},{\xi})\subs Fn({\kappa},2)$ be a maximal antichain such that 
  for each $q\in C({\alpha},{\zeta})$
  \begin{displaymath}
  q\Vdash {\xi}\in A_{\alpha} \lor
  q\Vdash {\xi}\notin A_{\alpha}.
  \end{displaymath}

Let $S_{\alpha}=\bigcup\{\dom(q):q\in 
\bigcup_{{\xi}<{\omega}_1}C({\alpha},{\xi})\}$.

By standard $\Delta$-system arguments there is $I\in {[{\omega}_3]}^{{\omega}_3}$ such that: 
\begin{enumerate}[(1)]
\item $\{S_{\alpha}:{\alpha}\in I\}$ i s a $\Delta$-system with kernel $S$,
\item $\forall {\alpha}<{\beta}\in I$ 
$S<S_{\alpha}\setm S<S_{\beta}\setm S$
and $tp(S_{\alpha})=tp(S_{\beta})$,
\item if ${\rho}_{{\alpha},{\beta}}$ denotes the natural bijection between 
 $S_{\alpha}$ and $S_{\beta}$, then $\forall {\xi}\in {\omega}_1$
 \begin{displaymath}
 C({\beta},{\xi})=\{{\rho}_{{\alpha},{\beta}}(q):q\in C({\alpha},{\xi})\},
 \end{displaymath}
 where ${\rho}(q)$ is defined as follows:
 $\dom({\rho}(q))={\rho}''\dom(q)$ and ${\rho}(q)({\rho}({\nu}))=q({\nu})$.
 \item if ${\alpha},{\beta}\in I$, ${\xi}<{\omega}_1$ and $q\in C({\alpha},{\xi})$,
 then \begin{displaymath}
 q\Vdash {\xi}\in A_{\alpha} \text{ iff }  {\rho}_{{\alpha},{\beta}}(q)\Vdash {\xi}\in A_{\beta}. 
 \end{displaymath} 
\end{enumerate}
Pick ${\delta}\in I$ and $J=\{{\alpha}_n:n\in {\omega}\}\in {[I\setm \{{\delta}\}]}^{{\omega}}$.

If $r\le p$ and $r\Vdash {\xi}\in A_{\delta}$, 
then there is $q\in C({\alpha},{\xi})$ such that $q\Vdash {\xi}\in A_{\delta}$ and 
$r\|q$. Let $n\in {\omega}$ such that 
$(S_{{\alpha}_n}\setm S)\cap \dom(r)=\empt$.
Let $s={\rho}_{{\delta},{\alpha}_n}(q)$. Then $s\Vdash {\xi}\in A_{{\alpha}_n}$.
Then $s\restriction S= q\restriction S$ 
so $t=q\cup r\cup s\in P$ and 
$t\le q$ and $t\Vdash {\xi}\in A_{{\alpha}_n}$.

So we proved Theorem \ref{tm:Cohen}.
\end{proof}

\begin{theorem}\label{tm:con}
    It is  consistent that $2^{\omega}=2^{{\omega}_1}={\omega}_3$,
    and 
    \begin{displaymath}
    {\omega}_3=    \almd{{\omega}_1}{{\omega}_1}<
    \indn{{\omega}_1}{{\omega}_1}={\omega}_4.
    \end{displaymath}
    
    \end{theorem}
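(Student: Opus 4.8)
The plan is to start from a model of GCH and force with a finite support product/iteration that simultaneously blows up $2^{\omega_1}$ to $\omega_3$ while destroying the ``$\indn{{\omega}_1}{{\omega}_1}={\omega}_3$'' phenomenon by a dedicated forcing that pushes $\indn{{\omega}_1}{{\omega}_1}$ up to $\omega_4$, and then argue that $\almd{{\omega}_1}{{\omega}_1}$ stays at $\omega_3$. Concretely, I would first arrange (in the ground model or by a preliminary step) that $2^\omega=2^{\omega_1}=\omega_3$ and $2^{\omega_2}=\omega_4$; this is needed both so that the statement is non-vacuous and so that $\indn{{\omega}_1}{{\omega}_1}\le {\omega}_4$ holds for free (there are only $\omega_4$-many families $\mc I\in[[\omega_1]^{\le\omega_1}]^{\omega_3}$ to consider, and one checks the trivial upper bound $\indn{{\omega}_1}{{\omega}_1}\le (2^{\omega_1})^+$). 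The core of the construction is a forcing notion $Q$ adding a witness that $\indn{{\omega}_1}{{\omega}_1}>{\omega}_3$, i.e.\ a family $\{A_\alpha:\alpha<\omega_3\}\subseteq[\omega_1]^{\omega_1}$ such that no $A_\alpha$ is covered by countably many of the others. Conditions of $Q$ should be finite approximations: finite partial functions specifying initial segments of finitely many $A_\alpha$'s together with finite ``promises'' of the form ``for this pair $(\alpha,\{\beta_n:n\in\omega\})$, the set $A_\alpha\setminus\bigcup_n A_{\beta_n}$ will be forced to contain a designated ordinal''. The verification that $Q$ is c.c.c.\ (or at least $\omega_2$-c.c.\ and proper, preserving cardinals) via a $\Delta$-system argument in the spirit of the proof of Theorem \ref{tm:Cohen} is the technical heart.

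After establishing that the witnessing forcing works, I would run it as a finite-support iteration (or product) of length $\omega_3$, or more precisely combine it with $Fn(\omega_3,2)$, so that in the extension $2^\omega=2^{\omega_1}=\omega_3$. The key point to then verify is the \emph{lower} bound on $\almd{{\omega}_1}{{\omega}_1}$: I must show $\almd{{\omega}_1}{{\omega}_1}\ge{\omega}_3$ survives, i.e.\ that whenever $\{A_\alpha:\alpha<\omega_2\}\subseteq[\omega_1]^{\omega_1}$ in the extension, some pair has uncountable intersection. Because the iterands are all c.c.c., I can invoke Lemma \ref{lm:Ainccc}: since the whole iteration is c.c.c.\ and the ground model satisfies ``$2^{\omega_1}=\omega_2$'' (before we blow up the continuum — so I should be careful and instead apply the lemma to an intermediate model, or re-prove the Erd\H{o}s--Rado argument directly in the final model using $(2^\omega)^+\to(\omega_1)^2_\omega$). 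This gives $\almd{{\omega}_1}{{\omega}_1}\le\omega_3$; combined with the ZFC lower bound $\almd{{\omega}_1}{{\omega}_1}\ge\omega_3$ (noted right after Lemma \ref{lm:cof}, since $\almd{\omega_1}{\omega_1}$ is regular and $\ge\omega_3$) we get $\almd{{\omega}_1}{{\omega}_1}=\omega_3$ exactly.

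The main obstacle I anticipate is the design and chain-condition analysis of the witnessing forcing $Q$: getting the ``promise'' bookkeeping right so that genericity actually forces $A_\alpha\not\subseteq\bigcup_{n}A_{\beta_n}$ for \emph{every} countable subfamily — including subfamilies introduced generically — while keeping conditions finite and the poset c.c.c. This is delicate because the relation ``$A_\alpha$ is covered by $\{A_{\beta_n}:n\in\omega\}$'' is a $\Pi^0_2$-type statement over the generic family, so the finite conditions must carry enough combinatorial data (e.g.\ a reserved ``private'' ordinal $\xi_{\alpha,\bar\beta}\in\omega_1$ for each potential pair) and the density arguments must show these private ordinals can always be placed into $A_\alpha$ and kept out of every $A_{\beta_n}$. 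A secondary subtlety is verifying that this step does not accidentally create a large almost-disjoint family with pairwise uncountable intersections, which would collapse $\almd{{\omega}_1}{{\omega}_1}$; the c.c.c.-plus-Erd\H{o}s--Rado argument above is exactly what rules this out, so one must make sure the witnessing forcing is genuinely c.c.c.\ and not merely proper. Once these points are in place, the three equalities $2^\omega=2^{\omega_1}=\omega_3$, $\almd{{\omega}_1}{{\omega}_1}=\omega_3$, and $\indn{{\omega}_1}{{\omega}_1}=\omega_4$ follow by assembling the pieces.
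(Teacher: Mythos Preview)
Your overall strategy---start from GCH, add a c.c.c.\ witness that $\indn{{\omega}_1}{{\omega}_1}>{\omega}_3$, and invoke Lemma~\ref{lm:Ainccc} to pin $\almd{{\omega}_1}{{\omega}_1}$ at ${\omega}_3$---is exactly the paper's, and your identification of the chain condition as the crux is correct. However, the forcing you sketch has a genuine gap. With ``finite approximations plus finite promises'' you cannot, in a single extension step, force a fresh $\xi$ into $A_\alpha$ while keeping it out of $A_\beta$ for \emph{every} $\beta$ in a given countable $K$: that is countably many negative commitments, so either your conditions cease to be finite or you must reserve a private ordinal for each pair $(\alpha,\bar\beta)$ in advance---and there are ${\omega}_3$ such pairs but only ${\omega}_1$ ordinals available.

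The paper's poset sidesteps this by working \emph{dually}. A condition specifies, for each $\delta$ in a finite $D\subset{\omega}_1$, a finite positive set $E_\delta\subset{\omega}_3$ (forcing $\delta\in A_\alpha$ for $\alpha\in E_\delta$) together with a \emph{countable} negative set $F_\delta\subset{\omega}_3$ (forcing $\delta\notin A_\beta$ for $\beta\in F_\delta$), where $F_\delta$ must have order type below a bound $\zeta(\delta)$ fixed in advance by a partition of ${\omega}_1$. Density is then immediate: given $\alpha$ and countable $K$, choose a fresh $\delta$ whose assigned bound exceeds $tp(K)$ and set $E_\delta=\{\alpha\}$, $F_\delta=K$. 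The price is that the c.c.c.\ argument must handle the countable $F_\delta$'s sitting at the kernel of a $\Delta$-system; this is exactly where the Erd\H os--Specker free-subset theorem enters, applied once for each element of the finite positive sets. This order-type stratification of the negative information is the missing idea in your plan.
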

    
\begin{proof}
The next lemma is the heart of the proof of the theorem. 
\begin{lemma}\label{lm:Ilarge}
    If $V\models GCH$ then there is a c.c.c poset $P$ such that 
    $V^P \models $  $2^{\omega}=2^{{\omega}_1}={\omega}_3$ and  $\indn{{\omega}_1}{{\omega}_1}={\omega}_4$.
    \end{lemma}

 \begin{proof}
 Write ${\mu}={\omega}_3$ and  
let 
\begin{displaymath}
L=\{{\zeta}<{\omega}_1:\forall {\eta}<{\zeta}\ ({\eta}+{\eta})<{\zeta}\},
\end{displaymath}
and for ${\zeta}\in L$ put 
\begin{displaymath}
\mc B_{\zeta}=\{B\subs {\mu}: tp(B)<{\zeta}\}.
\end{displaymath}

Observe that $\mc B_{\zeta}$ is closed under finite union. 

Fix a partition $\{C_{\zeta}:{\zeta}\in L\}$ of ${\omega}_1$ into 
uncountable pieces. Define  the function ${\zeta}:{\omega}_1\to L$
such that ${\delta}\in C_{{\zeta}({\delta})}$.

After this preparation define the poset $P=\<P,\le\>$ as follows.

Let $p=\<D,\{E_{\delta}:{\delta}\in D\}, \{F_{\delta}:{\delta}\in D\}\>\in P$
iff
\begin{enumerate}[(1)]
\item $D\in {[{\omega}_1]}^{<{\omega}}$,
\item  $E_{\delta}\in {[{\mu}]}^{<{\omega}}$, 
\item $F_{\delta}\in \mc B_{{\zeta}({\delta})}$,
\item  $F_{\delta}\cap E_{\delta}=\empt$  for each ${\delta}\in D$.
\end{enumerate} 
For $p\in P$ write 
$$p=\<D^p,\{E^p_{\delta}:{\delta}\in D^p\}, \{F^p_{\delta}:{\delta}\in D^p\}\>.$$
Let  
\begin{align*}
  p\le q \text{ iff }& D^p\supset D^q,\text{ and}\\ 
  &\text{$E^p_{\delta}\supset E^q_{\delta}$
  and $F^p_{\delta}\supset F^q_{\delta}$
  for each ${\delta}\in D^q$.} 
  \end{align*}

Given a generic filter 
$\mc G\subs P$,  
for ${\delta}<{\omega}_1$ and 
${\alpha}<{\mu}$   let
\begin{displaymath}
E_{\delta}=\bigcup\{E^p_{\delta}:p\in \mc G, {\delta}\in D^p\},
\end{displaymath} 
and 
\begin{displaymath}
A_{\alpha}=\{{\delta}<{\omega}_1: {\alpha}\in E_{\delta}\}.
\end{displaymath}

We will show that the family $\{A_{\alpha}:{\alpha}<{\mu}\}$ witnesses that
$\indn{{\omega}_1}{{\omega}_1}>{\omega}_3$.

Clearly, for each $p\in P$,
\begin{displaymath}
p\Vdash (\forall {\delta}\in D^p)\ {\delta}\in 
\bigcap_{{\alpha}\in E^p_{\delta}}A_{\alpha}\setm \bigcup_{{\beta}\in F^p_{\delta}}A_{\beta}.
\end{displaymath}

The following Claim is straightforward from the definition of $P$, and from the fact that 
every $\mc B_{\zeta}$ is closed under finite union for 
 ${\zeta}\in L$.
\begin{claim}\label{cl:comp}
  Two conditions $p$ and $q$ are compatible iff
  $$(E^p_{\delta}\cap F^q_{\delta})=(F^p_{\delta}\cap E^q_{\delta})=\empt$$
  for each ${\delta}\in D_p\cap D_q$.
  \end{claim}

  The next Claim is the key of the proof. 
  \begin{claim}
  $P$ satisfies c.c.c.  
  \end{claim}
  
\begin{proof}
In order to prove the Claim we will apply the following theorem of Erdős and Specker, \cite{ErSp61},  on the existence of free subsets.

\smallskip  
\noindent{\bf 
Free Subset Theorem of Erdős and Specker.}  
{\em If ${\eta}<{\omega}_1$ and  $f:{\omega}_1\to {[{\omega}_1]}^{{\omega}}$ is a function  
such that  $tp(f({\alpha}))<{\eta}$ for each ${\alpha}<{\omega}_1$,
then there is an uncountable $f$-free subset of ${\omega}_1$. 
}
\smallskip

Let $\{p_i:i<{\omega}_1\}\subs P$. Write $p_i=\<D^i,\{E^i_{\delta}:{\delta}\in D^i\}, 
\{F^i_{\delta}:{\delta}\in D^i\}\>$ for $i<{\omega}_1$.

By  thinning out our sequence we can assume that 
$\{D^i:i<{\omega}_1\}$ forms a $\Delta$-system
 with kernel $D$.

By Claim \ref{cl:comp}, $p_i$ and $p_j$ are compatible 
iff $(E^i_{\delta}\cap F^j_{\delta})=(F^i_{\delta}\cap E^j_{\delta})=\empt$
for each ${\delta}\in D$.

So we can assume that $D_i=D$ for each $i<{\omega}_1$, as well.

We can also assume that for each ${\delta}\in D$
there is a natural number $k_i$ such that $|E^i_{\delta}|=k_i$ for each $i<{\omega}_1$.

Write $D=\{{\delta}_m:m<n\}$.
Let $I_0={\omega}_1$.
By finite induction we can find  $I_{m+1}\in {[I_m]}^{{\omega}_1}$
such that 
\begin{displaymath}
E^i_{{\delta}_m}\cap F^j_{{\delta}_m}=\empt \text{ for each }\{i,j\}\in {[I_{m+1}]}^{2}.
\end{displaymath}
by applying the Free Subset Theorem $k_i$-times.

After that construction, the elements of $\{p_i:i\in I_{n}\}$ are pairwise 
compatible. Hence, $P$ has property K. 
\end{proof}

We want to show that 
\begin{displaymath}
V[\mc G]\models \forall {\alpha}\in {\mu}\
\forall J\in {[{\mu}\setm \{{\alpha}\}]}^{{\omega}}\ A_{\alpha}\setm \bigcup_{{\beta}\in J}A_{\beta}\ne \empt.
\end{displaymath}
Assume that $p\in P$ and 
\begin{displaymath}
p\Vdash  \dot{\alpha}\in {\mu}\land 
   \dot J\in {[{\mu}\setm \{{\alpha}\}]}^{{\omega}}.
  \end{displaymath}
  We can assume that $p$ decides the value of $\dot {\alpha}$.
  Since $P$ is c.c.c there is $K\in  {[{\mu}\setm \{{\alpha}\}]}^{{\omega}}\cap V$
  such that 
  \begin{displaymath}
  p\Vdash \dot J\subs K.
  \end{displaymath}
Pick ${\zeta}\in L$ with ${\zeta}>tp(K)$.
Pick ${\delta}\in {\omega}_1\setm D^p$ with ${\zeta}({\delta})={\zeta}$.
Define $q\in P$ as follows:
\begin{enumerate}[(a)]
\item $D^q=D^p\cup \{{\delta}\}$,
\item $E^q_{\eta}=E^p_{\eta}$ and  $F^q_{\eta}=F^p_{\eta}$ for ${\eta}\in D^p$,
\item $E^q_{\delta}=\{{\alpha}\}$ and $F^q_{\delta}=K$.
\end{enumerate}
Then, $q\in P$, $q\le p$ and 
\begin{displaymath}
q\Vdash {\delta}\in A_{\alpha}\setm \bigcup\{A_{\beta}:{\beta}\in K\}.
\end{displaymath}
Thus, 
\begin{displaymath}
V[\mc G]\models A_{\alpha}\setm \bigcup\{A_{\beta}:{\beta}\in J\}\ne \empt.
\end{displaymath}
So the family $\{A_{\alpha}:{\alpha}<{\mu}\}$ really witnesses that
$\indn{{\omega}_1}{{\omega}_1}>{\omega}_3$.

Thus, we proved Lemma \ref{lm:Ilarge}
\end{proof}   

Now, we obtain  Theorem \ref{tm:con} from Lemmas \ref{lm:Ilarge} and 
\ref{lm:Ainccc}.
\end{proof}

\section{A forcing construction of an LLSP-space of height ${\omega}+1$}

Our aim is to prove the following result.

\vspace{2mm}
\begin{theorem}\label{Theorem 1}
 If $2^{\omega_1} = \omega_2$ then there is an $\omega_2$-closed and $\omega_3$-c.c. partial order ${\mathbb   P}$ such that in $V^{{\mathbb   P}}$ we have that $\langle \omega_2\rangle_{\om} \concat \langle \omega_3\rangle \in \paseq {\omega + 1}$.
\end{theorem}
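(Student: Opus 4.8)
The plan is to build the space $X$ as an $X_U$ for a suitable LLSP-cone assignment $U$, forced by an $\omega_2$-closed, $\omega_3$-c.c.\ poset $\mathbb P$. The target cardinal sequence is $\langle\omega_2\rangle_\omega\concat\langle\omega_3\rangle$, so we want $\omega$ levels $I_0,\dots,I_{n},\dots$ each of size $\omega_2$ together with a top level $I_\omega$ of size $\omega_3$. The underlying set is fixed in the ground model: $X=\bigl(\bigcup_{n<\omega}\{n\}\times\omega_2\bigr)\cup(\{\omega\}\times\omega_3)$, with $X_n=\{n\}\times\omega_2$ and $X_\omega=\{\omega\}\times\omega_3$; the conditions of $\mathbb P$ will be countable approximations to the neighborhood function $U$ restricted to finitely (or countably) many top points, and the generic object will be a full LLSP-cone assignment $U$ on $X$ with this witnessing partition. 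By Theorem~\ref{tm:llspgen}(1), once $U$ is a genuine LLSP-cone assignment with witnessing partition $\langle X_\xi:\xi\le\omega\rangle$, the space $X_U$ is an LLSP space with $I_\xi(X_U)=X_\xi$, hence with cardinal sequence $\langle\omega_2\rangle_\omega\concat\langle\omega_3\rangle$.

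The combinatorial core is choosing, for each top point $p=\langle\omega,\alpha\rangle$, a countable ``trace'' $U(p)\cap X_n$ on each level $n$ in such a way that conditions~\eqref{cone:a}--\eqref{cone:b} of Definition~\ref{df:llsp-cone} hold. Condition~\eqref{cone:c} forces that the intersection $U(p)\cap U(p')$ of two distinct top cones be coverable by countably many cones from below, which (as in the proof of Theorem~\ref{tm:positive}(1)) is easiest to guarantee if the family $\{U(p)\cap X_0:p\in X_\omega\}$ is an \emph{almost disjoint} family of countable subsets of $\omega_2$ — this uses $2^{\omega}\ge\omega_1$ only mildly, and such an a.d.\ family of size $\omega_3$ exists since $2^{\omega_1}=\omega_2$ gives enough room when we first prepare the lower levels. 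Condition~\eqref{cone:b} is the ``no countable subcover'' requirement: for every top $p$, every $n<m$, and every countable $I\subseteq U(p)\setminus\{p\}$, we need $(U(p)\setminus U[I])\cap X_n\ne\emptyset$. The forcing $\mathbb P$ is designed precisely to make this generic: a condition specifies finitely many top points $p$, a countable initial part of $U(p)$ on each lower level, and countably many ``forbidden'' sets recording which points must stay outside $U(p)$; density arguments then add new points of $U(p)$ below any finite level avoiding any prescribed countable union, exactly as in the c.c.c.\ construction of Lemma~\ref{lm:Ilarge} but now with $\omega_1$-sized approximations and $\omega_2$-closure. The lower $\omega$ levels themselves are handled by taking, below each point $\sigma$ of $X_0$, a fixed ground-model LLSP-cone structure making $\bigcup_n X_n$ into (disjoint copies of) an LLSP space of height $\omega$ with all levels $\omega_2$ — the analogue of the $L_\sigma$'s in the proof of Theorem~\ref{tm:positive}(1).

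Concretely I would: (i) in $V$, fix the partition $\langle X_\xi:\xi\le\omega\rangle$ and a ground-model LLSP-cone assignment $U_0$ on $\bigcup_{n<\omega}X_n$ witnessing cardinal sequence $\langle\omega_2\rangle_\omega$ (so $X_n=I_n$), together with an almost disjoint family $\{A_\alpha:\alpha<\omega_3\}\subseteq[\omega_2]^{\omega_1}$ of traces on $X_0$ — here $2^{\omega_1}=\omega_2$ is what makes $|X_0|=\omega_2$ big enough to support $\omega_3$ almost-disjoint uncountable subsets after suitable bookkeeping, and it is also what will make $\mathbb P$ have the $\omega_3$-c.c.; (ii) define $\mathbb P$: a condition is a finite set $F\subseteq\omega_3$ together with, for each $\alpha\in F$, a function $u_\alpha$ assigning to each $n<\omega$ a countable subset of $U_0[A_\alpha]\cap X_n$ closed under $U_0$, plus a countable set $\Phi_\alpha$ of ``forbidden'' countable subsets of $\bigcup_n X_n$, with the compatibility demand that each $u_\alpha(n)$ be disjoint from $\bigcup\Phi_\alpha$ restricted to level $n$; order by coordinatewise extension; (iii) prove $\mathbb P$ is $\omega_2$-closed (countable decreasing sequences have lower bounds by taking unions of the countable data) and $\omega_3$-c.c.\ (a $\Delta$-system argument on the finite sets $F$ together with a free-set / disjoint-refinement argument on the countable data $u_\alpha,\Phi_\alpha$, mirroring the Erd\H os--Specker step in Lemma~\ref{lm:Ilarge}); (iv) prove the density lemmas that make the generic $U$ satisfy \eqref{cone:a}--\eqref{cone:b} and that $X_\omega$ has size $\omega_3$ (each $\alpha<\omega_3$ enters some condition); (v) invoke Theorem~\ref{tm:llspgen}(1) to conclude $X_U$ is the desired LLSP space. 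The main obstacle is step (iii): ensuring the $\omega_3$-c.c.\ while the conditions carry $\omega_1$-sized (rather than finite) data — one must combine the $\Delta$-system lemma for the finite skeleton $F$ with a careful thinning of the uncountable traces $A_\alpha$ (using their almost disjointness) and a free-subset argument for the forbidden sets $\Phi_\alpha$, and simultaneously verify that the resulting pairwise-compatible refinement is still large; getting these two requirements — strong closure and strong chain condition on a poset whose conditions are genuinely uncountable objects — to coexist is the delicate heart of the construction.
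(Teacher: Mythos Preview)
Your plan has a genuine gap at step~(i): under the hypothesis $2^{\omega_1}=\omega_2$ there is \emph{no} almost disjoint family $\{A_\alpha:\alpha<\omega_3\}\subseteq[\omega_2]^{\omega_1}$ in the ground model, because $|[\omega_2]^{\omega_1}|=\omega_2^{\omega_1}=(2^{\omega_1})^{\omega_1}=2^{\omega_1}=\omega_2$. There are simply only $\omega_2$ many subsets of $\omega_2$ of size $\omega_1$ to choose from. So the entire ``fix the traces in $V$, then force the cones'' architecture collapses; the almost-disjoint structure you need on the top level must itself be produced generically, not read off a ground-model family. (This is exactly why Theorem~\ref{tm:positive}(1) does not yield the sequence $\langle\omega_2\rangle_\omega\concat\langle\omega_3\rangle$ in this model and why forcing is necessary in the first place.)

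There is a second, related problem in step~(ii)--(iii): with a \emph{finite} support $F\subseteq\omega_3$ and countable side data the poset is at best $\sigma$-closed, not $\omega_2$-closed --- the union of an $\omega_1$-chain of finite supports has size $\omega_1$. You acknowledge in your last sentence that the conditions must be ``genuinely uncountable objects''; that is correct, but then the finite-$F$ description is wrong and the $\Delta$-system/free-subset argument you sketch (on finite $F$) is not the right one.

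The paper's route is rather different. A condition is a full weak LLSP-cone assignment $\langle X_p,U_p,D_p\rangle$ on a set $X_p$ meeting \emph{every} level $T_\alpha$ ($\alpha\le\omega$) in exactly $\omega_1$ points; in particular each condition already carries $\omega_1$ top points and records all the $D$-values between them. Closure under $\omega_1$-chains is then immediate. For the $\omega_3$-c.c.\ one uses $2^{\omega_1}=\omega_2$ to thin $\omega_3$ conditions to a $\Delta$-system whose members share the lower levels entirely and are pairwise isomorphic over the kernel; the real work is an explicit amalgamation of any two such conditions. The key subtlety --- absent from your sketch --- is that for $s\in X_\nu\setminus X^*$ with image $t=h_{\nu\mu}(s)$, the set $U_\nu(s)\cap U_\mu(t)=U_\nu(s)\setminus\{s\}$ need not be Lindel\"of, so one cannot simply take unions: the amalgam must introduce \emph{new} points $w^\xi_n$ on the finite levels, with cones assembled from a carefully chosen clopen partition of $V_\xi\setminus\{y_\xi\}$, to serve as the values of $D_r$ in the problematic cases. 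This amalgamation lemma, not a free-subset argument, is the heart of the chain-condition proof.
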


 \vspace{2mm} So, we obtain as an immediate consequence of Theorem \ref{Theorem 1} that it is consistent that $2^{\omega_1} = \omega_2$ and $\langle \omega_2\rangle_{\om} \concat \langle \omega_3\rangle \in \paseq {\omega + 1}$.


\begin{definition}\label{df:Definition 4}
  Let $f = \langle \kappa_{\alpha} : \al < \delta \rangle$ be a sequence of uncountable cardinals where $\delta$ is a non-zero ordinal. Put $T = \un \{T_{\al} : \al < \delta \}$ where each $T_{\al} = \{\al\}\times \ka_{\al}$. Let $P_f$ be the set of all $p = \langle X_p,U_p,D_p\rangle$ such that the following conditions hold:

  \begin{enumerate}[(i)]
  \item $X_p\subset T$ such that $|X_p\cap T_{\al}| < \ka_{\al}$ for all $\al < \delta$,

  \item $U_p:X_p\to \mathcal P(X_p)$ is a weak LLSP-cone assignment with witnessing partition $\langle X_p\cap T_{\al} : \al < \delta \rangle$,

  \item $D_p : [X_p]^2 \to [X_p]^{\om}$ is the LLSP-function associated with $U_p$.
  \end{enumerate}

  \vspace{1mm}
  If $p,q\in P_f$, we put $q\leq p$ if and only if the following holds:

  \begin{enumerate}[(a)]
  \item $X_q \supset X_p$,

  \item if $s\in X_p$ then $U_p(s)\subset U_q(s)$ and $U_q(s)\setminus U_p(s)\subset X_q\setminus X_p$,

  \item if $\{s,t\}\in [X_p]^2$ then $D_p(s,t) = D_q(s,t)$.
  \end{enumerate}
For $A\subs X_p$, write $U_q[A]=\bigcup \{U_q(a):a\in A\}$.
\end{definition}

\begin{proposition}\label{Proposition 5}
  Let $f = \langle \kappa_{\alpha} : \al < \delta \rangle$ be a non-empty sequence of uncountable regular cardinals. Assume $P\subs P_f$ and let 
  $\mbb P=\<P,\le\restriction P\>$.
  If  $\mbb P$ is ${\omega}_1$-closed and forcing with $\mbb P$
  preserves cofinalities and cardinals, moreover  
\begin{enumerate}[(C1)]
\item For every $s\in T$, the set 
$D_s = \{q\in P: s\in X_q \}$ is dense in ${\mathbb  P}$.
\item For each $t\in T$,  $A\in {[T\setm \{t\}]}^{{\omega}}$, 
${\beta}<\pi(t)$ and 
${\xi}<{\kappa}_{\beta}$, the set 
\begin{multline*}
D_{t,A,{\beta},{\xi}}=\{q\in P:A\subs U_q(t) \to \big(U_q(t)\setm 
U_q[A]\big)\cap \{{\beta}\}\times ({\kappa}_{\beta}\setm {\xi}) \ne \empt    \}
\end{multline*} 
is dense in $\mathbb P$.
\end{enumerate}
Then, forcing with ${\mathbb   P}$ adjoins an LLSP space whose cardinal sequence is $f$.
\end{proposition}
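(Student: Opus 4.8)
The plan is to take a $\mathbb P$-generic filter $\mc G$ and let $X = \bigcup\{X_q : q\in \mc G\}$, with $U = \bigcup\{U_q : q\in \mc G\}$ and $D = \bigcup\{D_q : q\in \mc G\}$. First I would check that these unions are well-defined functions: since any two $q_0,q_1\in \mc G$ have a common lower bound $r\in \mc G$, conditions (b) and (c) of the ordering guarantee that $U_{q_0}$ and $U_{q_1}$ agree wherever both are defined (on $X_{q_0}\cap X_{q_1}$, $U_r(s)\cap X_{q_i} = U_{q_i}(s)$ by (b), so they coincide), and similarly $D_{q_0}=D_{q_1}$ on $[X_{q_0}\cap X_{q_1}]^2$. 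By density (C1), $\dom(U) = X$ has $X\cap T_\al$ of size $\kappa_\al$ for every $\al<\delta$ — here I use that $\mathbb P$ preserves cardinals and cofinalities together with the fact that for any $\mu<\kappa_\al$ the set of $q$ with $|X_q\cap T_\al|\ge\mu$ is dense (again by (C1), since $\kappa_\al$ is regular so below it there is no cofinal countable-or-finite obstruction — actually one just adds finitely many points at a time, which $\omega_1$-closure of $\mathbb P$ and a genericity/density argument across $\mathbb P$ make routine). So $|X\cap T_\al| = \kappa_\al$.

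Next I would verify that $U$ is a weak LLSP-cone assignment for $X$ with witnessing partition $\langle X\cap T_\al : \al<\delta\rangle$ and that $D$ is its associated LLSP-function. Properties (a), (c), (d) of Definition \ref{df:llsp-cone} are local: each instance mentions only one or two points $x,y$ and their $U$-images, and once a condition $q\in \mc G$ contains $x,y$ (and $D_q(x,y)$, hence those points too), the relevant inclusion is already witnessed inside $q$ by clause (ii) of the definition of $P_f$ — and the inclusions are upward-absolute in the right direction because $U_q(s)\subseteq U(s)$ and, crucially, clause (b) says new points of $U_r(s)$ for $r\le q$ lie outside $X_q$, so e.g. $U(x)\cap U(y)$ does not grow beyond $U[D(x,y)]$ once it is computed. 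I would spell out this absoluteness argument carefully since it is the technical core: for (c), if $x\notin U(y)$ and $y\notin U(x)$ in $X$, pick $q\in\mc G$ with $x,y\in X_q$; then $x\notin U_q(y)$ and $y\notin U_q(x)$, so $D_q(x,y)\subseteq U_q(x)\cap U_q(y)$ and $U_q(x)\cap U_q(y)\subseteq U_q[D_q(x,y)]$; now any point added to $U(x)\cap U(y)$ by a later condition $r\le q$ would, by (b), lie in $X_r\setminus X_q$, but it would have to lie in $U_r[D_r(x,y)] = U_r[D_q(x,y)]$ since the $D$-value is frozen, and one checks $U_r[D_q(x,y)]$ is still contained in $\bigcup_{r\in\mc G}U_r[D_q(x,y)] = U[D(x,y)]$ — so the inclusion persists. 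Clause (d) is handled identically. This shows $U$ is a weak LLSP-cone assignment, and then Theorem \ref{tm:llspgen}(1) gives that $X_U$ is an LLSP space with $U(p)$ clopen Lindelöf and $I_{\le\al}(X_U)\supseteq X_{\le\al} = \bigcup_{\be\le\al}(X\cap T_\be)$.

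Finally I would promote ``weak'' to full LLSP-cone assignment by verifying property \eqref{cone:b}: for $\al<\be<\delta$, $p\in X\cap T_\be$, and $I\in [U(p)\setminus\{p\}]^\omega$, I must show $(U(p)\setminus U[I])\cap (X\cap T_\al)\ne\empt$. Fix $q\in\mc G$ with $p\in X_q$ and $I\subseteq X_q$ — possible since $I$ is countable, $\mathbb P$ is $\omega_1$-closed, and by (C1) each point of $I$ is captured densely (a standard argument: the set of conditions containing $p$ and all of $I$ is dense below any condition containing $p$, using $\omega_1$-closure to meet countably many dense sets). Then apply density (C2) with $t = p$, this $A = I$, the given $\be:=\al$ (in the notation of (C2), $\beta<\pi(t)=\be$, $\xi<\kappa_\al$ arbitrary): genericity gives $r\in\mc G$ with $r\in D_{p,I,\al,\xi}$; since $I = A\subseteq U_q(p)\subseteq U_r(p)$, the defining clause yields a point of $(U_r(p)\setminus U_r[I])\cap(\{\al\}\times(\kappa_\al\setminus\xi))$. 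I still need this point to survive in $U(p)\setminus U[I]$ globally, which is the same absoluteness issue as before — a point $z\in U_r(p)$ with $z\notin U_r[I]$ could in principle enter $U[I]$ via an even later condition, so I would instead argue: since $\kappa_\al$ is regular and for every $\xi<\kappa_\al$ such a witness in $\{\al\}\times(\kappa_\al\setminus\xi)$ exists densely, a genericity argument across all $\xi$ produces a single $z\in X\cap T_\al$ with $z\in U(p)$ and $z\notin U[I]$ in $V[\mc G]$ — here I would lean on $U[I] = \bigcup\{U(i):i\in I\}$ being, inside $V[\mc G]$, a set whose intersection with $T_\al$ has size $<\kappa_\al$ (each $U(i)$ is Lindelöf hence meets the discrete level $T_\al$ in a countable set, and $I$ is countable), so $(U(p)\cap T_\al)\setminus U[I]$ is nonempty as soon as $|U(p)\cap T_\al| = \kappa_\al$, which I get from (C2) by the same reasoning applied for cofinally many $\xi$. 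The main obstacle throughout is precisely this persistence/absoluteness bookkeeping — ensuring that the ``negative'' facts (a point is \emph{not} in some $U$-image) established at an intermediate condition are not destroyed by later conditions — and it is exactly clause (b) of the ordering (new points go into fresh levels, $U$-images and $D$-values of old points are frozen except for additions disjoint from the old domain) that makes it work; once \eqref{cone:b} is verified, Theorem \ref{tm:llspgen}(1)(iv) gives $I_\al(X_U) = X\cap T_\al$, so $\operatorname{CS}(X_U) = f$, completing the proof.
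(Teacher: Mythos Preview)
Your overall strategy---define $X$, $U$, $D$ as unions over the generic filter, verify the weak LLSP-cone properties via the compatibility conditions (b) and (c) of the ordering, then upgrade to a full LLSP-cone assignment using (C2)---is exactly what the paper does (the paper is terser, but the content is the same). Your absoluteness analysis for clauses~\ref{df:llsp-cone}\eqref{cone:c}--\eqref{cone:d} is correct and more explicit than the paper's.

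The gap is in your verification of clause~\ref{df:llsp-cone}\eqref{cone:b}. Your first instinct is right: pick $r\in\mc G$ containing $\{p\}\cup I$ and lying in $D_{p,I,\alpha,\xi}$ (this set is dense, as the intersection of the dense $D_{p,I,\alpha,\xi}$ with the dense open $D_p\cap\bigcap_{i\in I}D_i$), get $z\in (U_r(p)\setminus U_r[I])\cap T_\alpha$, and then observe that $z$ persists in $U(p)\setminus U[I]$. That persistence \emph{does} hold, by exactly the clause~(b) argument you used for \eqref{cone:c}: since $z\in X_r$ and each $i\in I$ is in $X_r$, any later $U_s(i)\setminus U_r(i)$ lies in $X_s\setminus X_r$ and so cannot contain $z$. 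You should trust this and stop there---it is the paper's (implicit) argument.

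Your fallback argument, by contrast, does not work. You claim each $U(i)$ meets $T_\alpha$ in a countable set because $U(i)$ is Lindel\"of and $T_\alpha$ is discrete. But $U(i)\cap T_\alpha$ need not be \emph{closed} in $U(i)$: if $i\in T_{\alpha+1}$ and $U(i)\setminus\{i\}\subseteq T_\alpha$ (the one-point Lindel\"ofication of $\kappa_\alpha$), then $U(i)\cap T_\alpha$ has size $\kappa_\alpha$. So $|U[I]\cap T_\alpha|$ can equal $\kappa_\alpha$, and the cardinality comparison with $|U(p)\cap T_\alpha|=\kappa_\alpha$ yields nothing. Drop this detour and rely on the persistence argument.
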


\begin{proof}
 Let $G$ be a ${\mathbb   P}$-generic filter. 
 Define $\<X,U,D\>$ as follows:
\begin{enumerate}[(i)]
\item $X=\bigcup\{X_p:p\in \mc G\}$, 
\item $U:{X}\to \mc P(X)$ such that 
$U(x)=\bigcup\{U_p(x):  x\in X_p\}$,
\item $D:{[X]}^{2}\to {[X]}^{{\omega}}$ such that 
$D\{x,y\}=D_p\{x,y\}$  for  each $\{x,y\}\in \dom D_p$.
\end{enumerate} 
By $(C1)$, we see that $X = T$. 
 Note that by condition $(C2)$, we have that for all $\alpha < \beta < \delta$, for all $t\in T_{\beta}$ and for all $I\in [U(t)\setminus \{t\}]^{\omega}$ 
 the set 
 $\{{\xi}<{\kappa}_{\alpha}:\<{\alpha},{\xi}\>\in U(t)\setm U[I]\}$
 is cofinal in ${\kappa}_{\alpha}$. Since ${\kappa}_{\alpha}$ 
 is regular even in the generic extension, 
  $|(U(t)\setminus U[I])\cap T_{\alpha}| = \kappa_{\alpha}$. Hence, $U$ is an LLSP-cone assignment for $X$, and so $X_U$  is an LLSP space such that $I_{\alpha}(X_U) = T_{\alpha}$ for every $\alpha < \delta$. Hence, $\mbox{CS}(X_U) = f$. 
\end{proof}

\begin{proof}[Proof of Theorem \ref{Theorem 1}] Consider the partial order ${\mathbb   P}_f$ where $f = \langle \omega_2\rangle_{\om} \concat \langle \omega_3\rangle$. Then, we define the partial order ${\mathbb   P} = (P,\leq\restriction P)$ as follows:
 \begin{displaymath}
 P=\{p\in P_f: |X_p\cap T_{\alpha}|={\omega}_1\text{ for each }{\alpha}\le {\omega}\}.
 \end{displaymath} 
Our plan is to apply Proposition \ref{Proposition 5} for $\mathbb   P$.

  \vspace{1mm} Clearly, ${\mathbb   P}$ is $\om_2$-closed. 

The main burden of the proof is the following lemma.   
\begin{lemma}\label{lm:chain}
${\mathbb   P}$ has the $\om_3$-chain condition.
\end{lemma}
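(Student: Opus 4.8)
The plan is to prove Lemma \ref{lm:chain} by a standard $\Delta$-system argument, exploiting that $2^{\omega_1}=\omega_2$ so that there are only $\omega_2$ possible ``shapes'' for a condition below $\omega_3$. Suppose $\{p_\eta : \eta < \omega_3\} \subseteq \mathbb P$. For each $\eta$, the support $S_\eta := \{\alpha \le \omega : X_{p_\eta} \cap T_\alpha \ne \emptyset\}$ together with the top-level coordinates $X_{p_\eta} \cap T_\omega \in [\omega_3]^{\omega_1}$ is the only genuinely unbounded data; the lower levels $T_\alpha$ for $\alpha < \omega$ have size $\omega_2$, so $X_{p_\eta} \cap T_{<\omega}$ ranges over a set of size $(\omega_2)^{\omega_1} = 2^{\omega_1} = \omega_2$. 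First I would apply the $\Delta$-system lemma to the family $\{X_{p_\eta} \cap T_\omega : \eta < \omega_3\}$ of $\omega_1$-sized subsets of $\omega_3$ (using that $\omega_3$ is regular and $(\omega_2)^{\omega_1} = \omega_2 < \omega_3$, so the hypotheses of the $\Delta$-system lemma for $\omega_1$-sized sets hold) to get $I \in [\omega_3]^{\omega_3}$ with $\{X_{p_\eta}\cap T_\omega : \eta \in I\}$ forming a $\Delta$-system with root $R$, with the enumerations of $X_{p_\eta}\cap T_\omega$ in the same order type, and with the ``tails'' $X_{p_\eta}\cap T_\omega \setminus R$ pairwise disjoint and increasing in $\eta$.

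Next I would thin $I$ further so that the conditions become ``isomorphic'' off the root: since $X_{p_\eta}\cap T_{<\omega}$ takes only $\omega_2 < \omega_3$ values, we may assume it is a fixed set $X^*$ for all $\eta \in I$; and since the amount of combinatorial data attached — namely $U_{p_\eta}$ restricted to $X^* \cup R$ and $D_{p_\eta}$ restricted to $[X^* \cup R]^2$, plus the pattern of how the tail points sit relative to the root — is again of size at most $2^{\omega_1} = \omega_2 < \omega_3$ once we fix the common order type, a final thinning makes all the $p_\eta$, $\eta \in I$, pairwise ``coherent'': $X_{p_\eta} \cap X_{p_{\eta'}} = X^* \cup R$, $U_{p_\eta}$ and $U_{p_{\eta'}}$ agree on $X^* \cup R$, $D_{p_\eta}$ and $D_{p_{\eta'}}$ agree on $[X^*\cup R]^2$, and the natural bijection between $X_{p_\eta}$ and $X_{p_{\eta'}}$ fixing $X^*\cup R$ is an isomorphism of the two conditions. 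Here one must be a little careful that the ranges $U_{p_\eta}(s)$ for $s$ in the root can only contain points of $X^* \cup R$ (this holds because the witnessing partition puts $U_p(s)\setminus\{s\}$ below the level of $s$, and all points below level $\omega$ lie in $X^*$, while the root points at level $\omega$ have neighborhoods meeting the top level only in themselves — so in fact $U_{p_\eta}(s) \subseteq X^* \cup \{s\}$ for $s \in R$), and similarly the $D_{p_\eta}$-values on pairs from $X^*\cup R$ stay inside $X^*\cup R$ by condition (c) and (b) in Definition \ref{df:Definition 4}.

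Then I would amalgamate two such conditions $p = p_\eta$, $q = p_{\eta'}$ with $\eta \ne \eta'$ in $I$ into a common lower bound $r$. Set $X_r = X_p \cup X_q$ with its induced partition into levels. For the neighborhood assignment, put $U_r(s) = U_p(s)$ for $s \in X_p \setminus R$ (which includes all of $X^*$, so we need $U_p$ and $U_q$ to already agree there — they do, by coherence), $U_r(s) = U_q(s)$ for $s \in X_q \setminus R$, and for $s \in R$ put $U_r(s) = U_p(s)\,(= U_q(s)) \subseteq X^* \cup \{s\}$. For $D_r$ on a ``mixed'' pair $\{s,t\}$ with $s \in X_p \setminus X_q$ and $t \in X_q \setminus X_p$: since $U_r(s) \subseteq X_p$ and $U_r(t) \subseteq X_q$ and $X_p \cap X_q = X^*\cup R$, the intersection $U_r(s) \cap U_r(t)$ and the differences are already ``seen'' inside the common part, so I can define $D_r(s,t) = D_p(s,u)\cup D_q(t,u)$-type unions over the relevant points, or — more cleanly — first enlarge each condition to a single larger condition where every pair is already decided and simply take the union there; the safest route is to verify directly that $\langle X_r, U_r, D_r\rangle \in P_f$ by checking clauses (a)--(d) of Definition \ref{df:llsp-cone}. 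The one subtle verification — and this is what I expect to be the main obstacle — is clause \eqref{cone:c}/\eqref{cone:d} for mixed pairs: one must show $U_r(s)\cap U_r(t) \subseteq U_r[D_r(s,t)]$ for a countable $D_r(s,t)$, and since $U_r(s)\cap U_r(t) = U_p(s)\cap U_q(t) \subseteq X^*$ is a countable (indeed, contained in the common countable-per-level structure — actually it could be larger, so one genuinely uses that $U_p(s) \cap (X^*\cup R)$ is Lindelöf hence its relevant trace is covered countably) set living in the shared part $X^*\cup R$, we can cover it using countably many $U_r$-values drawn from the common part. Because both $p$ and $q$ are genuine conditions (so $U_p, U_q$ are weak LLSP-cone assignments) and they agree on the common part, this cover exists; bundling it into $D_r(s,t)$ and checking it lands in $X^*\cup R \subseteq X_r$ finishes the case. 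Once $r \le p, q$ is confirmed, we conclude that $\{p_\eta : \eta \in I\}$ is pairwise compatible, contradicting that it was an antichain, so no antichain has size $\omega_3$ and $\mathbb P$ has the $\omega_3$-chain condition.
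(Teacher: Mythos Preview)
Your $\Delta$-system and thinning steps are fine, and after them you correctly arrive at two isomorphic conditions $p=p_\eta$, $q=p_{\eta'}$ agreeing on $X^*\cup R$, with a structure-preserving bijection $h:X_p\to X_q$ fixing $X^*\cup R$. The genuine gap is in the amalgamation step, where you set $X_r=X_p\cup X_q$ and try to define $D_r$ on mixed pairs using only points already in $X_r$.

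Consider a point $s\in (X_p\setminus X_q)\cap T_\omega$ and its image $t=h(s)\in(X_q\setminus X_p)\cap T_\omega$. By the isomorphism, $U_q(t)\setminus\{t\}=U_p(s)\setminus\{s\}\subseteq X^*$, so $U_r(s)\cap U_r(t)=U_p(s)\setminus\{s\}$. You need a countable $D_r(s,t)\subseteq U_p(s)\setminus\{s\}$ with $U_p(s)\setminus\{s\}\subseteq U_r[D_r(s,t)]$. But $U_p(s)\setminus\{s\}$ is an \emph{open}, not closed, subset of the Lindel\"of set $U_p(s)$ in the topology $\tau_{U_p}$, and in general it is \emph{not} Lindel\"of: for instance, nothing prevents $U_p(s)\setminus\{s\}$ from being an uncountable subset of $T_0$, where every $U_p(z)=\{z\}$, and then no countable subfamily of $\{U_p(z):z\in U_p(s)\setminus\{s\}\}$ can cover it. Your parenthetical ``$U_p(s)\cap(X^*\cup R)$ is Lindel\"of'' does not help, because that set equals $U_p(s)$ itself (which \emph{is} Lindel\"of), whereas what you must cover is $U_p(s)$ with its unique non-isolated point $s$ removed.

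The paper confronts exactly this obstacle (see the Remark in the proof) and resolves it by \emph{enlarging} $X_r$ beyond $X_p\cup X_q$: for each such $s=y_\xi$ one introduces countably many new points $w^\xi_n\in T_n\setminus(X_p\cup X_q)$, $1\le n<\omega$, whose cones $U_r(w^\xi_n)$ partition $U_p(y_\xi)\setminus\{y_\xi\}$ into clopen Lindel\"of pieces, and then sets $D_r(y_\xi,\bar y_\xi)=\{w^\xi_n:1\le n<\omega\}$. Without adding such new points, clause \ref{df:llsp-cone}\eqref{cone:c} cannot be satisfied for the pair $\{s,h(s)\}$, and your proposed $r$ is not a condition in $P_f$.
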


\begin{proof}

  In order to prove that, assume that $R = \{p_{\nu} : \nu < \om_3 \} \subset P$. Write $p_{\nu} = \langle X_{\nu}, U_{\nu}, D_{\nu} \rangle$ for $\nu < \om_3$. By thinning out $R$ if necessary, we may assume that $\{X_{\nu} : \nu < \om_3 \}$ forms a $\Delta$-system with kernel $X^*$ in such a way that for every $n < \om$, $X_{\nu}\cap T_n = X^* \cap T_n$. Let $Z_{\nu} = X_{\nu}\cap T_{\om}$ for $\nu < \om_3$. We consider in $T_{\om}$ the order induced by $\om_3$. By thinning out $R$ again if necessary, we may assume that there is an ordinal $\zeta^* < \om_2$ such that the order type of $Z_{\nu}\setminus X^*$ is $\zeta^*$ for every $\nu <\om_3$. Also, we may suppose that for $\nu < \mu < \om_3$ there is a bijection $h_{\nu\mu} : X_{\nu} \to X_{\mu}$ satisfying the following conditions:

  \begin{enumerate}[(a)]
  \item  $h_{\nu\mu}$ is the identity on $X^*$.

  \item For every $\xi < \zeta^*$, if $s$ is the $\xi$-element in $Z_{\nu}\setminus X^*$ and $t$ is the $\xi$-element in $Z_{\mu}\setminus X^*$  then $h_{\nu\mu}(s) = t$.

  \item $s\in U_{\nu}(t)$ iff $h_{\nu\mu}(s)\in U_{\mu}(h_{\nu\mu}(t))$ for all $\{s,t\}\in [X_{\nu}]^2$.

  \item $D_{\mu}(h_{\nu\mu}(s),h_{\nu\mu}(t)) = D_{\nu}(s,t)$ for all $\{s,t\}\in [X_{\nu}]^2$.

  \end{enumerate}

  Note that we deduce from $(a)$ and $(c)$ that 
  \begin{enumerate}[(a)]\addtocounter{enumi}{4}
  \item 
  if $t\in X^*$ then $U_{\nu}(t) = U_{\mu}(t)$.
  \end{enumerate}
   And we deduce from $(a)$ and $(d)$ that 
   \begin{enumerate}[(a)]\addtocounter{enumi}{5}
    \item 
   if $\{s,t\}\in [X^*]^2$ then $D_{\nu}(s,t) = D_{\mu}(s,t)$. 
\end{enumerate}

  Fix $\nu < \mu < \om_3$. The following Claim clearly completes the proof of the Lemma. 
  
  \smallskip
  \noindent{\bf Claim.} {\em  There is a  condition $r=\langle X_r, U_r,D_r \rangle \in P$ such that $r\leq p_{\nu},p_{\mu}$.} 
  \smallskip

  We need some preparation. 
  Put ${\tau} = {\tau}_{U_{\nu}}$. 
  By Theorem \ref{tm:llspgen}(1), ${\tau}$ is an LLSP topology on $X_{\nu}$.
From now on when we write ``{\em Lindelöf}'' we mean `{\em Lindelöf in the topology ${\tau}$}''.

 \noindent{\em Remark.} Note that if $s\in (X_{\nu}\setminus X_{\mu})\cap T_{\om}$ and $t=h_{{\nu}{\mu}}(s)\in (X_{\mu}\setminus X_{\nu})\cap T_{\om}$, 
  it may happen that 
  $U_{\nu}(s)\cap U_{\mu}(t)=U_{\nu}(s)\setm \{s\}$ is not Lindelöf, so there is no  $I\in [U_{\nu}(s)\cap U_{\mu}(t)]^{\omega}$
  with $U_{\nu}(s)\cap U_{\mu}(t)\subs U_{\nu}[I]$. 
  Thus, in order to amalgamate $p_{\nu}$ and $p_{\mu}$, we will need to add new elements to $X_{\nu}\cup X_{\mu}$.

Fix an enumeration  $\{y_{\xi}:{\xi}<{\omega}_1\}$  of $Z_{\xi}$, and 
for ${\xi}<{\omega}_1$ let 
 \begin{displaymath}
 V_{\xi}=U_{\nu}(y_{\xi})\setm U_{\nu}[\{y_{\zeta}:{\zeta}<{\xi}\}].
 \end{displaymath} 
Next, consider an enumeration $\{z^{\xi}_i:i<{\vartheta}_{\xi}\}$ of 
$V_{\xi}\setm \{y_{\xi}\}$, where ${\vartheta}_{\xi}\le {\omega}_1$, 
and let 
\begin{displaymath}
\mc W_{\xi}=\{V_{\xi}\cap \big(U_{\nu}(z^{\xi}_j)
\setm U_{\nu}[\{z^{\xi}_i:i<j\}]\big):j<{\vartheta}_{\xi}\}\setm \{\empt\}.
\end{displaymath}
Then, $\mc W_{\xi}$ is a partition of $V_{\xi}\setm \{y_{\xi}\}$ into non-empty clopen Lindelöf
subsets. Moreover,  $|\mc W_{\xi}|\le {\vartheta}_{\xi}\le {\omega}_1$, and 
for each $W\in \mc W_{\xi}$ there is a positive natural number $n_{\xi}(W)$
such that 
\begin{displaymath}
W\subs \bigcup\{T_n:n < n_{\xi}(W)\}.
\end{displaymath}
For $1\le n<{\omega}$ let   
\begin{displaymath}
  \mc W_{{\xi},n}=\{W\in \mc W_{\xi}:n_{\xi}(W)=n\}.
\end{displaymath}
Let $K=\{{\xi}<{\omega}_1: y_{\xi}\notin X^*\}$, and 
for each $1\le n<{\omega}$ pick pairwise different elements 
\begin{displaymath}
\{w^{\xi}_n:{\xi}\in K\}\subs T_n\setm (X_{\nu}\cup X_{\mu}).
\end{displaymath}
We start to define $r$.
For $y\in X_{\nu}$, write  $\bar y=h_{{\nu}{\mu}}(y)$.
Let 
\begin{enumerate}[(1)]
\item $X_r=X_{\nu}\cup X_{\mu}\cup\{w^{\xi}_n:{\xi}\in K, n<{\omega}\},\smallskip$
\item for $s\in X_r$ let 
\begin{displaymath}
U_r(s){}=\left\{\begin{array}{ll}
{U_{\nu}(s)}&\text{if $s\in X^*$,}\medskip\\
{U_{\nu}(y_{\xi})\cup\{w^{\xi}_n:n<{\omega}\}}&\text{if $s=y_{\xi}\in X_{\nu}\setm X^*$,}\medskip\\
{U_{\mu}(y_{\xi})\cup\{w^{\xi}_n:n<{\omega}\}}&\text{if $s=\bar y_{\xi}\in X_{\mu}\setm X^*$,}\medskip\\
\{w^{\xi}_n\}\cup\bigcup\mc W_{{\xi},n}&\text{if $s=w^{\xi}_n$.}\end{array}\right.
\end{displaymath}
\end{enumerate}
So $r$ satisfies \ref{df:llsp-cone}\eqref{cone:a}.

If $S\subs X_{\nu}$ is Lindelöf, then $\{U_{\nu}(s):s\in S\}$ is an open cover of $S$, so we can fix  $D(S)\in {[S]}^{{\omega}}$ such that $S\subs U_{\nu}[D(S)]$.

If $s\in X_{\nu}\cap T_n$ for some $n<{\omega}$ and ${\xi}<{\omega}_1$ then $U_{\nu}(s)\cap V_{\xi}$ is Lindelöf. 
Moreover, writing $$\mc W_{\xi}(s)=\{W\cap U_{\nu}(s):W\in \mc W_{\xi}\}\setm \{\empt\}$$ we have 
\begin{displaymath}
  U_{\nu}(s)\cap V_{\xi}=U_{\nu}(s)\cap (V_{\xi}\setm \{y_{\xi}\})=\bigcup \mc W_{\xi}(s), 
\end{displaymath} 
i.e. $\mc W_{\xi}(s)$ is a partition of a Lindelöf space into open Lindelöf subspaces. Therefore, 
\begin{displaymath}
\text{$\mc W_{\xi}(s)$ is countable.}
\end{displaymath} 
Next we define $D_r$ and in parallel we verify that $D_r$ satisfies 
\ref{df:llsp-cone}\eqref{cone:c}-\eqref{cone:d}.

\vspace{2mm}\noindent {\bf  Case 1}. $\{s,t\}\in {[X_{\nu}]}^{2}$.

Let 
\begin{equation}\label{Case1}
D_r(s,t)=D_{\nu}(s,t).
\end{equation}
Since $U_r(s)\cap U_{r}(t)=U_{\nu}(s)\cap U_{\nu}(s)$,
and $U_r(s)\setm  U_{r}(t)=U_{\nu}(s)\setm  U_{\nu}(s)$  
if $s\in U_{\nu}(t)$,
definition \eqref{Case1} works.  

For $\{s,t\}\in {[X_{\mu}]}^{2}$, the set $D_r(s,t)$ can be defined analogously:
let $D_r(s,t)=D_{\mu}(s,t)$.

\vspace{2mm}\noindent {\bf  Case 2}. $s\in X_{\nu}\setm X^*$,
$t\in X_{\mu}\setm X^*$, $t\ne \bar s$.

Let 
\begin{equation}\label{Case2}
D_r(s,t)=D_{\mu}(\bar s,t).
\end{equation}
Since $U_r(s)\cap U_r(t)=U_{\mu}(\bar s)\cap U_{\mu}(t)$,
definition \eqref{Case2} works.  

\vspace{2mm}\noindent {\bf  Case 3}. $s=y_{\xi}\in X_{\nu}\setm X^*$,
$t\in X_{\mu}\setm X^*$, $t= \bar s$.

Let 
\begin{equation}\label{Case3}
D_r(s,t)=\{w^{\xi}_n:1\le n<{\omega}\}.
\end{equation}
Then
\begin{multline*}
  U_r(s)\cap U_r(t)=U_r(y_{\xi})\cap U_r(\bar y_{\xi})=(V_{\xi}\setm \{y_{\xi}\})\cup
  \{w^{\xi}_n:1\le n<{\omega}\}=\\\bigcup \mc W_{\xi}\cup
  \{w^{\xi}_n:1\le n<{\omega}\}=
  \bigcup_{1\le n <{\omega}}\big(\{w^{\xi}_n\}\cup\bigcup \mc W_{{\xi},n}\big)\subs U_r[\{w^{\xi}_n:1\le n<{\omega}\}]. 
  \end{multline*}
Thus,   definition \eqref{Case3} works.

\vspace{2mm}\noindent {\bf  Case 4}. $\{s,t\}=\{w^{\xi}_n,w^{\zeta}_m\}$. 

Let 
\begin{equation}\label{Case4}
  D_r\{w^{\xi}_n,w^{\zeta}_m\}=\empt.
  \end{equation}
  
If ${\zeta}\ne {\xi}$, then $U_r(w^{\xi}_n)\cap U_r(w^{\zeta}_m)\subs V_{\xi}\cap V_{\zeta}=\empt$. 
If ${\xi}={\zeta}$, then  $n\ne m$, and so   
$U_r(w^{\xi}_n)\cap U_r(w^{\zeta}_m)=(\bigcup \mc W_{{\xi},m})\cap
(\bigcup \mc W_{{\xi},n})= \empt$ because 
$\mc W_{{\xi},m}\cap \mc W_{{\xi},n}=\empt$, and   the elements of $\mc W_{\xi}$ are pairwise disjoint.
So,  in both cases, definition \eqref{Case4}
works.

\vspace{2mm}\noindent {\bf  Case 5}. {\em $s=w^{\xi}_n$ and   $t\in X_{\nu}$.} 

Assume first that $t=y_{\xi}$.
Then  $U_r(s)\subs U_r(t)$. Thus, the setting
\begin{equation}\label{Case5a}
  D_r\{s,t\}=\empt
  \end{equation}
works. 

Assume now that  $t\ne y_{\xi}$.
Thus, $s\notin U_r(t)$.
Then, $V_{\xi}\cap U_r(t)\subs X^*$ is  a clopen Lindelöf subset of $V_{\xi}\setm \{y_{\xi}\}$.  
Since $\mc W_{\xi}$ is a clopen partition of $V_{\xi}\setm \{y_{\xi}\}$,
\begin{displaymath}
  |\{W\in \mc W_{\xi}:W\cap U_r(t)\ne \empt\}|\le {\omega}. 
  \end{displaymath}
  Since $U_r(s)\cap U_r(t)\subs V_{\xi}\cap U_{\nu}(t)$, the family 
\begin{displaymath}
  \mc W=\{W\in \mc W_{\xi}:W\cap U_r(s)\cap U_r(t)\ne \empt\}
  \end{displaymath}
  is countable. 
  If $W\in \mc W$, then $W\subs U_r(s)$, so 
  $W\cap U_r(s)\cap U_r(t)=W\cap U_r(t)$, and so it is clopen Lindelöf.
  Thus $U_r(s)\cap U_r(t)$ is the union of countably many clopen Lindelöf subspaces, 
  so it is also closed Lindelöf.

  Now we have to consider the following two subcases: (1) $t\not\in U_r(s)$, and (2) $t\in U_r(s)$.

So if  $s\notin U_r(t)$, then the setting 
\begin{equation}\label{Case5b}
  D_r\{s,t\}=D(U_r(s)\cap U_r(t))
  \end{equation}
works. 


Now, assume that $t\in U_r(s)$. So, $t\in X^*$ and $U_r(t) = U_{\nu}(t)$.  Since $U_{\nu}(t)\cap V_{\xi}$ is Lindel\"{o}f, we have that
$\{W\in \mc W_{{\xi},n}: W \cap U_{\nu}(t) \neq \emptyset \}$ is countable, and so $U_{\nu}(t)\setminus \bigcup \mc W_{{\xi},n}$ is Lindel\"{o}f. Then, $U_r(t)\setminus U_r(s) \subset U_{\nu}(D(U_{\nu}(t)\setminus \bigcup \mc W_{{\xi},n}))$. Thus, we define
\begin{equation}\label{eq:5c}
  D_r(s,t) = D(U_{\nu}(t)\setminus \bigcup \mc W_{{\xi},n})
\end{equation}

\vspace{2mm}\noindent {\bf  Case 6}. {\em $s=w^{\xi}_n$ and   $t\in X_{\mu}$.} 

We can proceed as in Case 5.
\medskip

Hence, we proved the claim,  which completed the proof of the Lemma.  \end{proof}

  \vspace{2mm} So, forcing with ${\mathbb   P}$ preserves cardinals and cofinalities. Clearly, condition (C1) of Proposition \ref{Proposition 5} holds. Now, we prove condition (C2). So assume 
  $t\in T$,  $A\in {[T\setm \{t\}]}^{{\omega}}$, 
${\beta}<\pi(t)$ and 
${\xi}<{\kappa}_{\beta}$.  

Since ${\mathbb   P}$ is $\omega_2$-closed, we have that $D_{t,A,{\beta},\xi}\in V$. Then, consider $p =\langle X_p,U_p,D_p\rangle\in P$. We define $q\in D_{t,A,{\beta},\xi}$ such that $q\leq p$. Without loss of generality, we may assume that $t\in X_p$. 

If $A\not\subset U_p(t)$, then $q=p$ satisfies the requirements. 
So we can suppose that $A\subs U_p(t)$.








Let $u\in T_k\setminus X_p$ be such that $u =\langle k,\zeta\rangle$ with $\zeta > \xi$. We define $q=\langle X_q,U_q,D_q\rangle$ as follows:

  \begin{enumerate}[(a)]
  \item $X_q = X_p \cup \{u \}$,

  \item $U_q(x) = U_p(x)$ if $x\in X_p$ and $t\not\in U_p(x)$, $U_q(x) = U_p(x)\cup \{u\}$ if $t\in U_p(x)$, and $U_q(u) = \{u\}$.

  \item $D_q(x,y) = D_p(x,y)$ for all $\{x,y\}\in [X_p]^2$, and  $D_q(x,u) = \emptyset$ for all $x\in X_p$.

  \end{enumerate}

  \vspace{1mm} Clearly, $q$ is as required.

So we can apply Proposition \ref{Proposition 5} to complete the proof of Theorem \ref{Theorem 1}.
\end{proof}

 Now, consider a $\mbb P$-generic filter $G$, where $\mbb P$ is the notion of
  forcing defined in Theorem \ref{Theorem 1}. We work in $V[G]$. Let $X$ be an LLSP
   space such that 
   $\mbox{CS}(X) = \langle \omega_2\rangle_{\omega}\concat \langle \omega_3 \rangle$. By \cite[Theorem 2.1]{MaSoLinSP22}, there is 
   an LLSP space $Y$ such that $\mbox{CS}(Y) = \langle \omega_1\rangle_{\omega_2}$. 
We may assume that the underlying sets of $X$ and $Y$ are disjoint. Let $Z$ be the
 topological sum  of $X$ and $Y$. So, $\mbox{CS}(Z) = \langle \lambda_{\al} : \al < 
\omega_2 \rangle$ where $\lambda_n = \omega_2$ for 
$n < \omega$, $\lambda_{\omega} = \omega_3$ and $\lambda_{\alpha} = \omega$ 
for $\omega < \lambda  < \omega_2$. 
Since $\omega_2^{\omega_1} = \omega_2$, it is easy to see that 
$\indn{{\omega_2}}{{\omega}_1} = \omega_3$, and so in 
Theorem \ref{tm:positive}(2) we can not replace 
${\kappa}_{\beta}< \indn {{\kappa}_{\alpha}}{{\kappa}_{\alpha}}$ with 
${\kappa}_{\beta}< \indn {{\kappa}_{\alpha}}{{\omega}_1}$. Also, since   
$\almd{{\omega_2}}{{\omega_1}}\leq \indn{{\omega_2}}{{\omega}_1}$ and 
$\omega_2^{\omega} = \omega_2$, 
we have that $\almd{{\omega_2}}{{\omega_1}} = \omega_3$, and hence the reverse 
implication of Theorem \ref{tm:positive}(1) does not hold.

\end{document}